\crefname{hypothesis}{Hypothesis}{Hypotheses}
\Crefname{ALC@unique}{Line}{Lines}
\newcommand{\RR}{\mathbb{R}}
\newcommand{\ep}{\varepsilon}
\title{Local and Global Existence for Non-local Multi-Species\\ Advection-Diffusion Models
	\thanks{Submitted to the editors \textcolor{red}{DATE}.
		\funding{VG and JRP acknowledge the support of an Engineering and Physical Sciences Research Council (EPSRC) grant EP/V002988/1. VG also acknowledges support from GNFM-INDAM. TH is grateful to support from the Natural Science and Engineering Council of Canada Discovery Grant RGPIN-2017-04158. MAL gratefully acknowledges support from the NSERC Discovery and Canada Research Chair programs.}}}
\author{Valeria Giunta\thanks{School of Mathematics and Statistics, University of Sheffield, Hicks Building, Hounsfield Road, Sheffield, S3 7RH, UK
		(\email{v.giunta@sheffield.ac.uk}, \email{j.potts@sheffield.ac.uk}).}
	\and Thomas Hillen \thanks{Department of Mathematical and Statistical Sciences, University of Alberta, Edmonton T6G 2G1, Alberta, Canada
		(\email{thillen@ualberta.ca}).} \and Mark A. Lewis \thanks{Department of Mathematical and Statistical Sciences and Department of Biological Sciences, University of Alberta, Edmonton T6G 2G1, Alberta, Canada (\email{mark.lewis@ualberta.ca})}
	\and Jonathan R. Potts\footnotemark[2]}
\begin{document}
	
	\maketitle

	\begin{abstract}
		Non-local advection is a key process in a range of biological systems, from cells within individuals to the movement of whole organisms.  Consequently, in recent years, there has been increasing attention on modelling non-local advection mathematically.  These often take the form of partial differential equations, with integral terms modelling the non-locality.  One common formalism is the aggregation-diffusion equation, a class of advection diffusion models with non-local advection.  This was originally used to model a single population, but has recently been extended to the multi-species case to model the way organisms may alter their movement in the presence of coexistent species.  Here we prove existence theorems for a class of non-local multi-species advection-diffusion models, with an arbitrary number of co-existent species.  We prove global existence for models in $n=1$ spatial dimension and local existence for $n>1$.  We describe an efficient spectral method for numerically solving these models and provide example simulation output.  Overall, this helps provide a solid mathematical foundation for studying the effect of inter-species interactions on movement and space use.	
	\end{abstract}

	\begin{keywords}
		Advection-diffusion, Aggregation-diffusion, Existence theorems, Mathematical ecology, Non-local advection, Taxis.
		
	\end{keywords}

	\begin{AMS}
		35A01, 35B09, 35B65, 35R09, 92-10, 92D40
	\end{AMS}

	\section{Introduction}

	It is essential for individuals, whether cells or animals, to gain information about their local environment \cite{turchin1989,RaiEcology}. Not only do individuals sense environmental features, such as food, temperature, pH-level, and so on, they also are able to detect other individuals in a local spatial neighborhood, such as predators, prey, or conspecifics \cite{CosnerCantrellbook,okubolevin2013}. This feature is not only restricted to higher level species, but is also found in cells \cite{HilLewis2014}. For example human immune cells gather information about their tissue environment and they are able to distinguish friend from foe \cite{shahriyari16,eftimie11}. The process of gaining information about presence or absence of other species in the environment is intrinsically non-local \cite{Eftimienonlocal,Lutscherbook}. 	Mathematically, the non-local sensing of neighboring individuals leads to non-local advection terms in the corresponding continuum models, and that is the topic of this paper.

	Non-local advection is a mechanism underlying a wide range of biological systems.  In ecology, animals sense their surroundings and make decisions to avoid predators, find prey, and/or aggregate in swarms, flocks or herds \cite{Eftimienonlocal,CuckerSmale,Eftimie2007,Levine00,mogilner1999non}. This non-local sensing can occur on several scales, from near to far \cite{benhamou2014,bastille2018spatial,martinez2020range}.  These scales affect the overall spatial arrangement of populations \cite{potts2014predicting,buttenschon2020non,armstrong2006continuum} and can lead to species aggregation, segregation, and also more complex mixing patterns \cite{Eftimie2007,Eftimie2,pottslewis2019}.  Whereas animals can sense and interact over distances using sight, smell and hearing, in cell biology, cells interact non-locally by extending long thin protrusions, probing the environment  \cite{armstrong2006continuum, painteretal2015,Osswald}. 
	Chemotaxis processes, leading to the following of chemical trails by organisms, can also be formulated as non-local advective processes \cite{hilpainterschmeiser,shi2021spatial}, and have been observed in taxa from single-celled organisms to insect populations to large vertebrate animals \cite{hillen2009user}. 
	
	From a mathematical modelling perspective, non-locality in continuum models often arises as an integral term inside a derivative. The corresponding models become intrinsically non-local, and classical theories, developed for local models, no longer apply \cite{buttenschon2020non,carrillo2019aggregation}. Non-local terms in continuum models offer new challenges and new opportunitites \cite{bedrossian2011local,bertozzi2011lp,pottslewis2016b,mogilner1999non,buttenschon2020non}.   For example, in single-species models of aggregation, the structure of the non-local advective term is fundamental for avoiding blow-up and ensuring global existence of solutions \cite{hilpainterschmeiser,dolbeault2004optimal, bertozzi2007finite, carrillo2019aggregation}.  In models of home ranges \cite{briscoeetal2002} and territory formation \cite{pottslewis2016a}, non-local advection is necessary for ensuring well-posedness.  In the context of modelling swarm dynamics, \cite{mogilner1999non} showed that non-local advection is vital for the formation of cohesive swarms.
	
	
	Consequently, non-local advection has become a popular feature of biological models \cite{buttenschon2020non}.  One common class of such models is the aggregation-diffusion equation \cite{topaz2006nonlocal,delgadino2019uniqueness}.  This models a single population, $u(x,t)$, that undergoes diffusion and non-local self-attractive advection, leading to the following general form \cite{carrillo2019aggregation}
	\begin{equation}\label{eq:aggeq}
	\frac{\partial u}{\partial t}=\Delta u^m- \nabla \cdot[u \nabla(K\ast u)],
	\end{equation}
	where 
	$K \ast u$ is the convolution of $u$ with a spatial averaging kernel, $K$, and $m$ is a positive integer.   As such, the structure of $K$ models the non-local interactions of the population with itself.  
	\Cref{eq:aggeq} can lead to the spontaneous formation of non-uniform patterns, consisting of single or multiple stationary aggregations of various shapes and sizes, under certain conditions \cite{james2015numerical, craig2016blob}.  However, there is numerical evidence that the multiple-aggregation case is often, and possibly always, metastable \cite{topaz2006nonlocal, burger2014stationary, carrillo2019aggregation}.
	
	One can readily generalise the aggregation-diffusion equation to the multi-species situation as follows: 
	\begin{align}
	\frac{\partial u_i}{\partial t}=D_i\Delta u_i^m-\nabla \cdot\left[u_i\nabla\sum_{j=1}^N h_{ij}K\ast u_j\right],
	\label{eq:msagg}
	\end{align}
	where $u_1(x,t),\dots,u_N(x,t)$ are locational densities of $N\geq 1$  populations at time $t$, $D_i \in {\mathbb R}_{>0}$ is the diffusion constant of population $i$, and  $h_{ij} \in {\mathbb R}$ are constants denoting the attractive (if $h_{ij}>0$) or repulsive (if $h_{ij}<0$) tendencies of population $i$ to population $j$.  
	Indeed, the $N=2$ case has received some attention \cite{evers2017equilibria, carrillo2018zoology}, with equations of the same or similar form to \Cref{eq:msagg} being applied to predator-prey dynamics \cite{fagioli2019multiple}, animal territoriality \cite{pottslewis2016a}, cell-sorting \cite{painteretal2015} as well as human gangs \cite{barbaro2020analysis}.  For $N=2$, it is possible to observe both aggregation and segregation patterns emerge, depending on the relative values of the $h_{ij}$ constants \cite{evers2017equilibria, pottslewis2019}.
	
	An example of \Cref{eq:msagg} where $N$ is arbitrary was proposed by \cite{pottslewis2019} as a model of animal ecosystems.   The authors assumed that each population can detect the population density of other populations over a local spatial neighbourhood.  The mechanism behind this detection could have various forms, three of which are explained in \cite{pottslewis2019}: direct observations of individuals at a distance, indirect communication via marking the environment (e.g. using urine or faeces), and memory of past interactions with other populations.  \cite{pottslewis2019} showed that all three of these biological mechanisms lead to the same multi-species aggregation-diffusion model in the appropriate adiabatic limit. The authors analysed pattern formation properties of \Cref{eq:msagg} where the diffusion term is linear, 
	i.e. $m=1$, 
	in one spatial dimension with periodic boundary conditions.  They further assumed that $K(x)$ is a top-hat kernel, i.e. $K(x)=1/(2\delta)$ for $x \in (-\delta,\delta)$ and $K(x)=0$ otherwise, and also that $j \neq i$ (i.e. no self-attraction or repulsion).  With these assumptions in place, the authors showed that, whilst the pattern formation properties when $N=2$ can be fully categorised, the $N=3$ case is much richer.  Indeed, numerical analysis for $N=3$ revealed stationary patterns, regular oscillations, period-doubling bifurcations, and irregular spatio-temporal patterns suggestive of chaos \cite{pottslewis2019}.  
	
	These insights highlighted the importance of understanding non-linear, non-local feedbacks between the locations of animal populations.  In the ecological literature, the field of {\em Species Distribution Modelling} (SDM) is dominated by efforts to find correlations between animal locations and environmental features \cite{araujo2006five, zimmermann2010new}.  These features are then used to predict species distributions in either new locations or future environmental conditions \cite{beaumont2008choice, marmion2009evaluation} and hence inform conservation actions \cite{villero2017integrating}.  However, despite considerable research effort into SDMs, a recent meta-analysis of 33 different SDM approaches revealed that none of the models studied were good at making predictions in a range of novel situations \cite{norberg2019comprehensive}. 
	Based on the results of \cite{pottslewis2019}, we conjecture that this may be, in part, due to a failure of these models to account for non-linear feedbacks in movement mechanisms.  We propose that employing a multi-species aggregation-diffusion approach, typified by \Cref{eq:msagg}, may help improve predictive performance when modelling the spatial distributions of animal populations. 
	
	As a step to this end, the aim of this paper is twofold: to begin building solid mathematical foundations underlying the model and observations of \cite{pottslewis2019}, and to construct an efficient numerical scheme for future investigations.  For our mathematical analysis, we are able to drop the assumption from \cite{pottslewis2019} that $j \neq i$, thus allowing for self attraction or repulsion.  However, we have to assume that $K$ is twice differentiable, so cannot be the same top-hat function used by \cite{pottslewis2019} but can be a smooth approximation of the top-hat function.  With these assumptions in place, we prove the global existence of a unique, positive solution in one spatial dimension and local existence (up to a finite time $T_*$) in arbitrary dimensions. We also propose an efficient scheme for solving multi-species aggregation-diffusion models numerically, based on a spectral method, and give some example output of both stationary and fluctuating patterns.
	
	Our paper is organised as follows. \Cref{sec:mod} introduces the study system and states the main results (global existence and positivity in one spatial dimension; local existence in arbitrary dimensions).  In \Cref{sec:analytic} we prove the main results. \Cref{sec:numeric} details a method for numerically solving the study system, together with some example numerical output. \Cref{sec:disc} gives a discussion and concluding remarks.
	
	\section{The Model}
	\label{sec:mod}
	
	
	We consider $N$ different populations of moving organisms.  These could either be different species or different groups within a species, such as territorial groupings or herds.  In either case, we use the term {\it population} and write $u_i(x,t)$ to denote  the  density of population $i \in \{1,...,N\}$ at time $t$.  As with \Cref{eq:msagg}, we assume that each population detects the population density of other populations over space, and adjusts its directed motion via advection towards a weighted sum of the spatially averaged population densities.  
	
	
	Before generalising to arbitrary dimensions, we first define our system in one dimension (1D) as follows
	\begin{eqnarray}
	\frac{\partial u_i}{\partial t} &=& D_i\frac{\partial^2 u_i}{\partial x^2}  - \frac{\partial}{\partial x}\left[u_i \frac{\partial}{\partial x}\left(\sum_{j=1}^N h_{ij}\bar u_j\right)\right], \nonumber \\
	\label{eq:model1}
	\bar u_j(x) &=& (K \ast u_j)(x) := \int_0^L K(x-y)u_j(y) dy.  
	\end{eqnarray}
	We examine this system on a domain $[0,L]$ with periodic boundary conditions, so that $\Omega = [0,L]/\{0,L\}$ (the topological quotient of $ [0,L]$ by $\{0,L\}$).  Here, $K \geq 0$ is a {\it local averaging kernel} (i.e. a probability density function on $\Omega$ with zero mean), $D_i$ is the diffusion constant of population $i$, and $h_{ij}$ is the strength of attraction (resp. repulsion) of population $i$ to (resp. from) population $j$ if $h_{ij}>0$ (resp. $h_{ij}<0$).  The local averaging kernel, $K$, describes the spatial scale over which organisms scan the environment when deciding to move in response to the presence of other populations.  Here, we will assume $K$ is twice differentiable with $\nabla { K} \in L^\infty(\mathbb{T})$.
	
	Notice that $\int_\Omega  u_i(x,t) dx$ does not vary over time so we define a constant $p_i=\int_\Omega  u_i(x,t) dx$ for each $i$.  Consequently, our model is suitable for modelling systems of animal or cell populations over timescales where births and deaths have a negligible effect on the population size.  For example,  for systems of organisms whose population sizes vary by only small amounts across a season (as is the case for many mammals, birds, and reptiles in summer), this could model dynamics over a single season.
	
	We can use vector notation to write System (\ref{eq:model1}) in a more compact form. Let 
	\[ u= (u_1, \dots u_N)^T,  \qquad D=\mbox{diag}(D_1, \dots, D_N), \qquad H=(h_{ij})_{i,j},\]
	where $(h_{ij})_{i,j}$ denotes the matrix whose $i,j$-th entry is $h_{ij}$.  Then System (\ref{eq:model1}) can be written as 
	\begin{equation}\label{eq:model2}
	u_t = Du_{xx} - (u\cdot (H\bar u)_x)_x.
	\end{equation}  
	In higher dimensions we make the analogous assumption that $\Omega \subset \RR^n$ is a periodic domain, i.e. a torus $\mathbb{T}$. Then the system on the general $n$-dimensional torus $\mathbb{T}$ becomes
	
	\begin{equation}\label{eq:model3}
	u_t = D\Delta u - \nabla\cdot(u\cdot \nabla (H\bar u)).
	\end{equation}  
	To avoid confusion in this vector notation we can write each row as
	\[
	u_{it} = D_i\sum_k \frac{\partial^2}{\partial x_k^2}  u_i - \sum_k\frac{\partial}{\partial x_k} \left(u_i \sum_j \frac{\partial}{\partial x_k}  (h_{i j}\bar u_j)\right),
	\]
	which leads to
	\[
	u_t = D\sum_k \frac{\partial^2}{\partial x_k^2}  u - \sum_k\frac{\partial}{\partial x_k} \left(u \circ \sum_j \frac{\partial}{\partial x_k}  (H_{\cdot j}\bar u_j)\right),
	\]
	where $H_{\cdot j}$ is the $j$-th column of $H$ and $\circ$ is the Hadamard product.  
	%
	We now state our main result, as follows.  
	\begin{theorem} 
		\label{thm:main}
		Assume $u_0\in H^2(\mathbb{T} )^N$. If $n\geq 1$ then there exists a time $T_* \in (0,\infty]$ and a unique solution $u(x,t)$ to \Cref{eq:model3}, valid for $t \in [0,T_*)$, such that
		\[ u\in C^1((0,T_*), L^2(\mathbb{T} ))^N \cap C^0([0,T_*), H^2(\mathbb{T} ))^N.\]
		If $n=1$ and $u_0\in C^2(\mathbb{T} )^N$ such that $u_0(x)>0$ for $x \in \mathbb{T}$, then there is a unique positive solution $u(x,t)$ to \Cref{eq:model3} such that
		\[ u\in C^1((0,\infty), L^2(\mathbb{T} ))^N \cap C^0([0,\infty), C^2(\mathbb{T} ))^N.\]
	\end{theorem}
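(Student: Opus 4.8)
The plan is to treat the problem as an abstract semilinear parabolic equation and exploit the fact that convolution against the $C^2$ kernel is a smoothing operation, so that the apparently second-order non-local term is in fact controlled by \emph{low-order} norms of $u$. Write \Cref{eq:model3} as $u_t = Au + F(u)$, where $A = D\Delta$ generates an analytic semigroup on $L^2(\mathbb{T})^N$ with domain $H^2(\mathbb{T})^N$, and where the nonlinearity is put in flux form $F(u) = -\nabla\cdot G(u)$ with $G(u) = u\circ v$ and velocity $v := \nabla(H\bar u) = (\nabla K)\ast(Hu)$. The key observation, which I would record as a preliminary lemma, is that because $K\in C^2$ with $\nabla K\in L^\infty(\mathbb{T})$, Young's inequality gives $\|v\|_{W^{1,\infty}}\le C\|u\|_{L^2}$ and $\|\nabla^2 v\|_{L^\infty}\le C\|u\|_{H^1}$ uniformly, so $v$ always lives in an $L^\infty$-based space controlled by one fewer derivative of $u$ than naive counting suggests.

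For the local existence in arbitrary $n$, I would pass to the mild formulation $u(t) = e^{tA}u_0 + \int_0^t e^{(t-s)A}F(u(s))\,\ud s$ and set up a contraction in $C^0([0,T],H^2(\mathbb{T})^N)$ for small $T$. Distributing derivatives between $u\in H^2$ and $v\in W^{2,\infty}$ via the product rule yields $\|G(u)\|_{H^2}\le C\|u\|_{H^2}^2$ and the Lipschitz bound $\|G(u)-G(w)\|_{H^2}\le C(\|u\|_{H^2}+\|w\|_{H^2})\|u-w\|_{H^2}$; crucially this avoids needing $H^2$ to be a Banach algebra, so it holds in every dimension. Writing $e^{\tau A}\nabla\cdot = \nabla\cdot e^{\tau A}$ and using the analytic smoothing $\|e^{\tau A}\|_{H^2\to H^3}\le C\tau^{-1/2}$ produces the integrable singularity $\tau^{-1/2}$, so the Banach fixed-point theorem gives a unique local mild solution, which standard analytic-semigroup regularity upgrades to $u\in C^1((0,T_*),L^2)^N\cap C^0([0,T_*),H^2)^N$. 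Continuing to a maximal $T_*$ gives the blow-up alternative: either $T_*=\infty$ or $\limsup_{t\to T_*}\|u(t)\|_{H^2}=\infty$.

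For the global result in $n=1$, the extra ingredients are positivity and a uniform-in-time a priori bound. In one dimension $H^2(\mathbb{T})\hookrightarrow C^1(\mathbb{T})$, so with $u_0\in C^2$ parabolic regularity makes the solution classical, and each component solves the linear advection-diffusion equation $\partial_t u_i = D_i\partial_{xx}u_i - \partial_x(u_i v_i)$ with bounded coefficients; the maximum principle then preserves $u_i>0$. Positivity makes the conserved mass into an $L^1$ bound, $\|u_i\|_{L^1}=\int_\Omega u_i = p_i$, and feeding this back through Young's inequality upgrades the velocity estimate to $\|v\|_{W^{1,\infty}}\le C(p)$, a constant independent of time. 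I would then run energy estimates: integrating the advection term by parts and using $\|\partial_{xx}(H\bar u)\|_{L^\infty}\le C(p)$ gives $\tfrac{\ud}{\ud t}\tfrac12\|u\|_{L^2}^2\le -\sum_i D_i\|\partial_x u_i\|_{L^2}^2 + C(p)\|u\|_{L^2}^2$, hence $\|u(t)\|_{L^2}\le e^{C(p)t}\|u_0\|_{L^2}$, and an analogous bootstrap controls the $H^1$ and $H^2$ norms on every finite interval. This rules out the blow-up alternative, forcing $T_*=\infty$, and classical $C^2$ regularity is maintained throughout.

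The main obstacle is precisely this a priori bound, and it is also what confines the global result to one dimension. \emph{Without} positivity the best available velocity bound is only $\|v\|_{L^\infty}\le C\|u\|_{L^2}$, which turns the energy inequality into a Riccati-type estimate $\tfrac{\ud}{\ud t}\|u\|_{L^2}\le C\|u\|_{L^2}^2$ that genuinely permits finite-time blow-up; it is positivity, through the conserved $L^1$-mass, that \emph{linearizes} the estimate and yields global control. Establishing positivity rigorously requires the maximum principle, which needs the classical $C^2$ regularity furnished by Sobolev embedding only when $n=1$. Closing this positivity--regularity--a priori-estimate loop is therefore the delicate step, and separating the dimension-independent local theory from the one-dimensional mechanism that defeats blow-up is where the argument must be handled with care.
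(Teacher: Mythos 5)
Your proposal is correct in outline, but it takes a genuinely different route from the paper at two points. For local existence you contract in $C^0([0,T],H^2(\mathbb{T}))^N$ using bilinear estimates of the form $\|u\circ v\|_{H^2}\lesssim\|u\|_{H^2}\|v\|_{W^{2,\infty}}$, whereas the paper (\cref{thm:local}) runs the fixed point in the much weaker space $L^\infty((0,T),L^2(\mathbb{T}))^N$, using only the low-order convolution bound of \cref{l2:h1b} together with the $H^{-1}\to L^2$ smoothing of \cref{lemmaTaylor}, and only afterwards upgrades regularity via $L^2$ and $H^1$ energy estimates (\cref{lem:local}); the paper's choice buys mild solutions and uniqueness for arbitrary $u_0\in L^2(\mathbb{T})^N$, while yours buys a cleaner continuation criterion (the $H^2$ blow-up alternative) at the cost of demanding $u_0\in H^2$ from the start. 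For globality the paper uses no blow-up alternative at all: since a mild solution is not known a priori to be positive, $\|u\|_{L^1}$ need not be conserved, so the paper defines $T_*$ as the first time $\|u\|_{L^1}$ doubles, proves all estimates and positivity conditionally on $t<T_*$, and closes the loop in \cref{thm:global} by noting that positivity plus conservation of $\int_{\mathbb{T}}u\,dx$ keeps $\|u\|_{L^1}$ constant, so doubling never occurs. Your version---positivity gives a uniform $L^1$ bound, which linearizes the energy inequality (rather than leaving it Riccati-type) and bounds $\|u\|_{H^2}$ on finite intervals, defeating the alternative---is logically equivalent and isolates exactly the same mechanism; indeed your positivity argument via the maximum principle for the linear equation $\partial_t u_i = D_i\partial_{xx}u_i - v_i\partial_x u_i - u_i\partial_x v_i$ with bounded coefficients is arguably more rigorous than the paper's first-touch argument in \cref{lem:pos2}, which asserts strict inequalities $u_{ixx}(t_0,x_0)>0$ and $u_{it}(t_0,x_0)<0$ that need not hold at a first zero. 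One small discrepancy to fix: your claimed bounds $\|v\|_{W^{1,\infty}}\le C\|u\|_{L^2}$ and $\|\nabla^2 v\|_{L^\infty}\le C\|u\|_{H^1}$ require $\nabla^2K\in L^\infty$, whereas the paper assumes only $\nabla K\in L^\infty$ and systematically places a single derivative on $K$ and the rest on $u$ (e.g. $\partial_{x_h}^2(K\ast u_j)=(\partial_{x_h}K)\ast(\partial_{x_h}u_j)$ in \cref{l:deltainfty}); your contraction still closes under the weaker hypothesis, since $\|\nabla^2 v\|_{L^\infty}\lesssim\|\nabla K\|_{L^\infty}\|u\|_{H^2}$ already suffices for $\|G(u)\|_{H^2}\lesssim\|u\|_{H^2}^2$.
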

	The first part of this theorem ($n\geq 1$) will follow from \cref{lem:local} and the second ($n=1$) will be established in \cref{thm:global}.
	
	\subsection{Notation}
	
	We will employ the following notation throughout.  Let $ f:L^p(\Omega) \rightarrow \mathbb{R} $.
	\begin{itemize}
		\item $ \|f\|_{L_p} =( \int_{\Omega} |f|^p)^{1/p} $, where $ 1\leq p < \infty $.
		\item $ \| f\|_{L^{\infty}} = inf \{C \geq 0 : |f(x)| \leq C, a.e. \} $.
	\end{itemize}
	Let $ g=(g_1, g_2, \dots, g_N):(L^p)^N \rightarrow \mathbb{R}$. We will use the following norms
	\begin{itemize}
		\item $ \|g\|_{(L_p)^N} = \sum_{i=1}^{N}\| g_i \|_{L^p} $, where $ 1\leq p < \infty $.
		\item $ \| g\|_{(L^{\infty})^N} = \max_{i=1, 2, \dots, N} \{\|g_i\|_{L^{\infty}}\} $.
	\end{itemize}
	To ease the notation, we will usually omit the index $ N $ and write $  \| g\|_{L^{p}} $ instead of $  \| g\|_{(L^{p})^N} $.
	
	\section{Model Analysis}
	\label{sec:analytic}
	\subsection{Existence and uniqueness of mild solutions}
	\begin{definition}
		Given $u_0\in (L^2(\mathbb{T} ))^N$ and $T>0$. We say that 
		\[ u(x,t) \in L^\infty((0,T), L^2(\mathbb{T} ))^N\]
		is a  \underline{mild solution} of \Cref{eq:model3} if 
		\begin{equation}\label{eq:mild}
		u = e^{D\Delta t} u_0 -  \int_0^t e^{D\Delta (t-s)} \nabla \cdot (u\cdot \nabla ( H\bar u) ) ds ,
		\end{equation}
		for each $0<t\leq T$, where $e^{D\Delta t} $
		denotes the solution semigroup of the heat equation system $u_t = D \Delta u$ on $\mathbb{T} $, i.e. on $\Omega$  with periodic boundary conditions. 
	\end{definition}  
	The crucial term in (\ref{eq:model2}) is the non-local term $H\bar u$ and the following a-priori estimates for $\bar u$ are essential for the existence theory of this model. We will consider convolution with an appropriately smooth kernel, $K$.  Eventually, in \cref{l:deltainfty}, we will need to assume that $K$ is twice differentiable, but the first two Lemmas only require $K$ to be (once) differentiable, so we state them in this more general case.
	
	\begin{lemma}\label{l2:h1}
		Let $\varphi \in L^2(\mathbb{T})$ and $ K:\mathbb{T}\rightarrow {\mathbb R}$ be differentiable.  Then $\|\bar\varphi\|_{H^1}=\|K\ast\varphi\|_{H^1}\leq(\|K\|_{L^1}+\|\nabla K\|_{L^1})\|\varphi\|_{L^2}$. 
	\end{lemma}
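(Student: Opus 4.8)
The plan is to reduce the estimate to two applications of Young's convolution inequality: one for the function and one for its gradient. I take the $H^1$ norm to be $\|f\|_{H^1}=\|f\|_{L^2}+\|\nabla f\|_{L^2}$, so that it suffices to bound each summand separately by $\|K\|_{L^1}\|\varphi\|_{L^2}$ and $\|\nabla K\|_{L^1}\|\varphi\|_{L^2}$ respectively, and then add. Note first that the right-hand side is finite: since $\mathbb{T}$ has finite measure and (by the standing assumption) $\nabla K\in L^\infty(\mathbb{T})$, both $K$ and $\nabla K$ belong to $L^1(\mathbb{T})$.

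For the $L^2$ term I would invoke Young's convolution inequality on the torus with the exponent relation corresponding to $r=2$, $p=1$, $q=2$, giving
\[
\|\bar\varphi\|_{L^2}=\|K\ast\varphi\|_{L^2}\leq\|K\|_{L^1}\|\varphi\|_{L^2}.
\]
The substantive step is to establish the identity $\nabla\bar\varphi=(\nabla K)\ast\varphi$ in the weak sense, which simultaneously shows $\bar\varphi\in H^1$. I would do this by testing against an arbitrary smooth periodic function $\psi$: writing $\int_{\mathbb{T}}\bar\varphi\,\nabla\psi$ as a double integral over $\mathbb{T}\times\mathbb{T}$, applying Fubini (justified because the integrand $K(x-y)\varphi(y)\nabla\psi(x)$ is integrable, as $K$ and $\nabla\psi$ are bounded and $\varphi\in L^2(\mathbb{T})\subset L^1(\mathbb{T})$), and integrating by parts in $x$ using periodicity so that no boundary terms survive, one arrives at $\int_{\mathbb{T}}\bar\varphi\,\nabla\psi=-\int_{\mathbb{T}}((\nabla K)\ast\varphi)\,\psi$. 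This identifies $(\nabla K)\ast\varphi$ as the distributional gradient of $\bar\varphi$.

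A second application of Young's inequality then gives $\|\nabla\bar\varphi\|_{L^2}=\|(\nabla K)\ast\varphi\|_{L^2}\leq\|\nabla K\|_{L^1}\|\varphi\|_{L^2}$, which in particular confirms $\nabla\bar\varphi\in L^2(\mathbb{T})$ and hence $\bar\varphi\in H^1(\mathbb{T})$. Summing the two estimates yields the claim. The main obstacle is precisely the differentiation identity $\nabla(K\ast\varphi)=(\nabla K)\ast\varphi$: because $\varphi$ is only $L^2$ one cannot differentiate pointwise under the integral in a naive way, so the clean route is the weak formulation above (or, equivalently, to prove the identity for smooth $\varphi$, where differentiation under the integral is elementary given $\nabla K\in L^\infty$, and then pass to the limit along a sequence $\varphi_k\to\varphi$ in $L^2$, using the continuity of convolution supplied by Young's inequality). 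Everything else is routine.
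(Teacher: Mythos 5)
Your proof is correct and takes essentially the same route as the paper's: split the $H^1$ norm as $\|K\ast\varphi\|_{L^2}+\|\nabla(K\ast\varphi)\|_{L^2}$, use the identity $\nabla(K\ast\varphi)=(\nabla K)\ast\varphi$, and apply Young's convolution inequality to each summand. The only difference is that you rigorously justify the differentiation identity in the weak sense via Fubini and integration by parts, a step the paper simply asserts as an observation --- a welcome extra level of care, not a different method.
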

	\begin{proof}
		First, $\|{K}\ast\varphi\|_{H^1}=\|{K}\ast\varphi\|_{L^2}+\|\nabla({K}\ast\varphi)\|_{L^2}$. We also observe that $ \nabla( K \ast \varphi) =  \nabla K \ast \varphi = (\partial_{x_1} K \ast \varphi, \partial_{x_2} K \ast \varphi, \dots, \partial_{x_n} K \ast \varphi) $. Then, applying Young's convolution inequality to both summands, we have $\|{K}\ast \varphi\|_{L^2} \leq \|{K}\|_{L^1}\|\varphi\|_{L^2}$ and $\|\nabla({K}\ast\varphi)\|_{L^2}=\|(\nabla{K})\ast\varphi\|_{L^2} = \|(\partial_{x_1} K \ast \varphi, \partial_{x_2} K \ast \varphi, \dots, \partial_{x_n} K \ast \varphi)\|_{L^2} = \sum_{i=1}^{n}\|\partial_{x_i}{K}\ast \varphi \|_{L^2} \leq  \sum_{i=1}^{n} \| \partial_{x_i} K\|_{L^1} \| \varphi \|_{L^2} = \| \nabla K \|_{L^1}  \| \varphi \|_{L^2} $, proving the lemma.
	\end{proof}

	\begin{lemma}\label{l2:h1b}
		Let $\varphi \in L^\infty(\mathbb{T})$ and ${ K}:\mathbb{T}\rightarrow {\mathbb R}$ be differentiable with $\nabla {K} \in L^\infty(\mathbb{T})$.  Then $\|\nabla {K}\ast\varphi\|_{L^\infty}\leq|\mathbb{T}|^{1/2}\|\nabla {K}\|_{L^\infty}\|\varphi\|_{L^2}$. 
	\end{lemma}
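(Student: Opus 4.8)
The plan is to establish the bound pointwise in $x$ and then pass to the essential supremum. Fix $x \in \mathbb{T}$ and consider a single component $\partial_{x_i} K \ast \varphi$; writing the convolution explicitly as $\int_{\mathbb{T}} \partial_{x_i} K(x-y)\varphi(y)\,\mathrm{d}y$, I would apply the Cauchy--Schwarz inequality in the integration variable $y$ to obtain
\[
|(\partial_{x_i} K \ast \varphi)(x)| \le \|\partial_{x_i}K(x-\cdot)\|_{L^2}\,\|\varphi\|_{L^2}.
\]
This is precisely the endpoint ($p=q=2$) case of Young's convolution inequality, which is the natural tool here since it maps $L^2 \ast L^2$ into $L^\infty$.

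Next I would exploit the translation invariance of Lebesgue measure on the torus: for each fixed $x$, the substitution $z = x-y$ shows $\|\partial_{x_i}K(x-\cdot)\|_{L^2} = \|\partial_{x_i}K\|_{L^2}$, removing the dependence on $x$. Then, since $\mathbb{T}$ has finite measure $|\mathbb{T}|$, I would control the $L^2$ norm by the $L^\infty$ norm via $\|\partial_{x_i}K\|_{L^2} \le |\mathbb{T}|^{1/2}\|\partial_{x_i}K\|_{L^\infty} \le |\mathbb{T}|^{1/2}\|\nabla K\|_{L^\infty}$. Combining these estimates yields a bound on $|(\partial_{x_i}K\ast\varphi)(x)|$ that is uniform in both $x$ and $i$, so taking the essential supremum over $x$ and the maximum over the components $i$ (in accordance with the component-wise convention for the $L^\infty$ norm of the vector $\nabla K \ast \varphi$) delivers the claim.

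There is no genuine obstacle here: the result is a routine combination of Cauchy--Schwarz and the continuous embedding $L^\infty(\mathbb{T}) \hookrightarrow L^2(\mathbb{T})$ available on a finite-measure domain. The only points requiring mild care are the bookkeeping for the vector-valued gradient---ensuring the identical bound holds for every component so that the maximum is controlled---and the appeal to translation invariance, which is clean precisely because we work on the torus, where there is no boundary. It is worth noting the contrast with \cref{l2:h1}: there the $L^\infty$ regularity of $\nabla K$ is not assumed and one settles for an $H^1$ estimate through the standard $L^1 \ast L^2 \to L^2$ Young inequality, whereas here the extra hypothesis $\nabla K \in L^\infty(\mathbb{T})$ is exactly what permits pushing the target space up to $L^\infty$.
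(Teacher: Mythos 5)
Your proof is correct, and it arrives at exactly the constant in the statement, but it splits the exponents in Young's inequality differently from the paper. The paper uses the pairing $L^\infty \ast L^1 \to L^\infty$, giving the intermediate bound $\|(\nabla K)\ast\varphi\|_{L^\infty} \le \|\nabla K\|_{L^\infty}\|\varphi\|_{L^1}$, and only afterwards applies H\"older on the finite-measure torus to replace $\|\varphi\|_{L^1}$ by $|\mathbb{T}|^{1/2}\|\varphi\|_{L^2}$; you instead use the endpoint case $L^2 \ast L^2 \to L^\infty$ (pointwise Cauchy--Schwarz plus translation invariance of the measure on $\mathbb{T}$), so the finite-measure factor $|\mathbb{T}|^{1/2}$ is absorbed by $\nabla K$ rather than by $\varphi$. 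Your componentwise bookkeeping matches the paper's convention $\|\nabla K\|_{L^\infty}=\max_i\|\partial_{x_i}K\|_{L^\infty}$, so the final step of taking the maximum over components is sound, and your closing remark contrasting the hypotheses with \cref{l2:h1} is accurate. The one thing the paper's decomposition buys that yours does not: its intermediate $L^1$-based inequality is precisely what is reused in \cref{l:gradl1} (and again in the $H^1$ estimate of \cref{lem:local}), where only $\|u_i\|_{L^1}$ is controlled --- via the definition of $T_*$ --- and an $L^2$ bound on $\varphi$ is not available; your Cauchy--Schwarz bound in terms of $\|\varphi\|_{L^2}$ could not substitute there. Conversely, your route is self-contained at the pointwise level (Cauchy--Schwarz rather than general Young) and makes transparent that $\nabla K \in L^\infty(\mathbb{T})$, hence $\nabla K \in L^2(\mathbb{T})$ on the finite-measure torus, is the operative hypothesis lifting the target space to $L^\infty$.
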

	\begin{proof}
		First note that 
		\begin{align*}
		\|\nabla({ K}\ast\varphi)\|_{L^\infty}&=\|(\nabla{K})\ast\varphi\|_{L^\infty}\\
		&  = \| (\partial_{x_1} K \ast \varphi, \partial_{x_2} K \ast \varphi, \dots, \partial_{x_n} K \ast \varphi)  \|_{L^\infty} \\
		& = \max_{i=1, 2, \dots, n} \{ \| \partial_{x_i} K \ast \varphi\|_{L^\infty}  \} \\
		& \leq \max_{i=1, 2, \dots, n} \{ \|\partial_{x_i} K\|_{L^{\infty}} \|\varphi \|_{L^1} \} \\
		&  =  \max_{i=1, 2, \dots, n} \{ \|\partial_{x_i} K\|_{L^{\infty}}\} \|\varphi \|_{L^1} \\
		&  = \|\nabla{K}\|_{L^\infty}\|\varphi\|_{L^1},
		\end{align*}
		using Young's convolution inequality in the fourth line. Then, since $\mathbb{T}$ is of finite measure in $\mathbb{R}^N$, we have $\|\varphi\|_{L^1} \leq |\mathbb{T}|^{1/2}\|\varphi\|_{L^2}$ (this step uses H\"older's inequality, applied to $\|{\mathbf 1}\varphi\|_{L^1}$ where ${\mathbf 1}:{\mathbb T}\rightarrow {\mathbb R}$ such that ${\mathbf 1}(x)=1$).  Hence $\|\nabla{K}\|_{L^\infty}\|\varphi\|_{L^1} \leq |\mathbb{T}|^{1/2}\|\nabla{K}\|_{L^\infty}\|\varphi\|_{L^2}$, proving the lemma.
	\end{proof}

	\begin{lemma}\label{l:deltainfty}
		Let $\varphi \in H^1(\mathbb{T})$  and ${K}:\mathbb{T}\rightarrow {\mathbb R}$ be twice differentiable with $\nabla { K} \in L^\infty(\mathbb{T})$.  Then $
		\| \Delta (K\ast \varphi) \|_{L^\infty}\leq \|\nabla K \|_{L^{\infty}} \| \nabla \varphi\|_{L^2} |\mathbb{T}|^{1/2}$. 
	\end{lemma}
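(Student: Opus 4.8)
The plan is to distribute exactly one spatial derivative onto the kernel and one onto $\varphi$, which is precisely what the target bound $\|\nabla K\|_{L^\infty}\|\nabla\varphi\|_{L^2}$ anticipates: one factor of $\nabla K$ and one factor of $\nabla\varphi$. Concretely, I would first establish the convolution identity
\[
\Delta(K\ast\varphi) = \sum_{i=1}^{n} (\partial_{x_i} K)\ast(\partial_{x_i}\varphi),
\]
where each summand is the convolution of $\partial_{x_i} K \in L^\infty(\mathbb{T})$ with the weak derivative $\partial_{x_i}\varphi \in L^2(\mathbb{T})$, the latter existing because $\varphi \in H^1(\mathbb{T})$. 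Once this identity is in hand, the estimate is a routine application of H\"older's inequality, essentially identical to the argument already used in \cref{l2:h1b}.

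To justify the identity, I would start from $\partial_{x_i}^2(K\ast\varphi)(x) = \int_{\mathbb{T}} \partial_{x_i}^2 K(x-y)\,\varphi(y)\,\mathrm{d}y$, rewrite $\partial_{x_i}^{(x)}\big[\partial_{x_i} K(x-y)\big] = -\partial_{y_i}\big[\partial_{x_i} K(x-y)\big]$, and integrate by parts in the $y$ variable. Since we are on the torus $\mathbb{T}$ and all functions are periodic, no boundary terms appear, and the integration by parts transfers the derivative onto $\varphi$, producing $\int_{\mathbb{T}} \partial_{x_i} K(x-y)\,\partial_{y_i}\varphi(y)\,\mathrm{d}y = (\partial_{x_i} K)\ast(\partial_{x_i}\varphi)(x)$. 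Summing over $i$ gives the displayed identity. I expect this derivative-transfer step to be the only delicate point, because $\varphi$ possesses merely a weak gradient in $L^2$; the resolution is simply that the integration by parts above is exactly the defining relation of the weak derivative $\partial_{x_i}\varphi$ (tested against the smooth, periodic function $y\mapsto \partial_{x_i}K(x-y)$), so it is legitimate for every fixed $x$, with the requirement that $K$ be twice differentiable ensuring the integrand is well defined.

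For the final estimate I would bound each summand pointwise by H\"older's inequality, $\big|(\partial_{x_i} K)\ast(\partial_{x_i}\varphi)(x)\big| \leq \|\partial_{x_i} K\|_{L^\infty}\,\|\partial_{x_i}\varphi\|_{L^1}$, and then convert the $L^1$ norm to an $L^2$ norm using the finite measure of $\mathbb{T}$, namely $\|\partial_{x_i}\varphi\|_{L^1} \leq |\mathbb{T}|^{1/2}\|\partial_{x_i}\varphi\|_{L^2}$ (the same H\"older step as in \cref{l2:h1b}). Summing over $i$, using $\|\partial_{x_i} K\|_{L^\infty} \leq \|\nabla K\|_{L^\infty} = \max_i \|\partial_{x_i} K\|_{L^\infty}$ and recognising that $\sum_{i=1}^n \|\partial_{x_i}\varphi\|_{L^2} = \|\nabla\varphi\|_{L^2}$ in the paper's vector-norm convention, yields
\[
\|\Delta(K\ast\varphi)\|_{L^\infty} \leq |\mathbb{T}|^{1/2}\,\|\nabla K\|_{L^\infty}\sum_{i=1}^n \|\partial_{x_i}\varphi\|_{L^2} = \|\nabla K\|_{L^\infty}\,\|\nabla\varphi\|_{L^2}\,|\mathbb{T}|^{1/2},
\]
which is the claimed bound.
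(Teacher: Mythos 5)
Your proposal is correct and follows essentially the same route as the paper's proof: split $\Delta(K\ast\varphi)$ as $\sum_{i=1}^n(\partial_{x_i}K)\ast(\partial_{x_i}\varphi)$, bound each term by $\|\partial_{x_i}K\|_{L^\infty}\|\partial_{x_i}\varphi\|_{L^1}$ via Young/H\"older, and pass from $L^1$ to $L^2$ using the finite measure of $\mathbb{T}$ exactly as in \cref{l2:h1b}. The only difference is that you explicitly justify the derivative-transfer identity through the weak-derivative definition (tested against the periodic function $y\mapsto\partial_{x_i}K(x-y)$), a step the paper simply asserts; this is a welcome addition of rigor, not a different argument.
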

	\begin{proof} First note that
		\begin{align*}
		\|\Delta ({K}\ast\varphi)\|_{L^\infty} &= \left\| \sum_{i=1}^{n} \partial_{x_i}^2 (K \ast \varphi) \right\|_{L^\infty}
		\\
		& = \left\| \sum_{i=1}^{n} \partial_{x_i} K \ast \partial_{x_i}\varphi \right\|_{L^\infty} \\
		& \leq \sum_{i=1}^{n} \left\| \partial_{x_i} K \ast \partial_{x_i}\varphi \right\|_{L^\infty} \\
		& \leq  \sum_{i=1}^{n} \left\| \partial_{x_i} K \right\|_{L^\infty}  \|\partial_{x_i}\varphi \|_{L^1}\\
		&\leq\|\nabla{K}\|_{L^\infty}\|\nabla\varphi\|_{L^1},
		\end{align*}
		where the second inequality uses Young's convolution inequality. Then, as in \cref{l2:h1b}, we have $\|\nabla\varphi\|_{L^1} \leq |\mathbb{T}|^{1/2}\|\nabla\varphi\|_{L^2}$.  Hence $\|\nabla{K}\|_{L^\infty}\|\nabla\varphi\|_{L^1} \leq |\mathbb{T}|^{1/2}\|\nabla{K}\|_{L^\infty}\|\nabla\varphi\|_{L^2}$, proving the lemma.
	\end{proof}

	Before we formulate the proof of local and global existence, we recall a regularity result for the heat equation semigroup on a torus as formulated by \cite{taylorIII} p.274: 
	\begin{lemma}\label{lemmaTaylor} For all $p\geq q >0$ and $s\geq r$ we have the embedding 
		\[ e^{\Delta t} : W^{r,q}(\mathbb{T}) \to W^{s,p}(\mathbb{T}), \qquad \mbox{with norm } C t^{-\kappa}, \]
		where $C$ is a constant and 
		\[ \kappa = \frac{n}{2} \left(\frac{1}{q} - \frac{1}{p}\right) + \frac{1}{2} (s-r). \]
	\end{lemma}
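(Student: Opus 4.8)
The plan is to diagonalise the semigroup in the Fourier basis and then separate the two mechanisms encoded in $\kappa$: the gain of integrability (the $\tfrac n2(1/q-1/p)$ term) and the gain of smoothness (the $\tfrac12(s-r)$ term). On the torus $\mathbb{T}=(\mathbb{R}/L\mathbb{Z})^n$ the operator $e^{\Delta t}$ acts on each Fourier mode $e^{2\pi i k\cdot x/L}$ by multiplication by $e^{-4\pi^2|k|^2t/L^2}$, so it is convolution with the periodic heat kernel $\Theta_t$, the $L\mathbb{Z}^n$-periodisation of the Gaussian $G_t$ on $\mathbb{R}^n$. I would use the Bessel-potential description $\|f\|_{W^{s,p}}\simeq\|\langle D\rangle^s f\|_{L^p}$, where $\langle D\rangle=(1-\Delta)^{1/2}$; since all the operators in sight are Fourier multipliers and hence commute, the claim reduces to the single bound $\|\langle D\rangle^{s-r}e^{\Delta t}g\|_{L^p}\leq Ct^{-\kappa}\|g\|_{L^q}$, taken over $g=\langle D\rangle^r f\in L^q$.

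To exploit the two mechanisms independently I would write $e^{\Delta t}=e^{\Delta t/2}e^{\Delta t/2}$ and estimate $\langle D\rangle^{s-r}e^{\Delta t}=\big(\langle D\rangle^{s-r}e^{\Delta t/2}\big)\circ e^{\Delta t/2}$ as the composition $L^q\to L^p\to L^p$. For the first factor (integrability gain), Young's convolution inequality gives $\|e^{\Delta t/2}g\|_{L^p}\leq\|\Theta_{t/2}\|_{L^\rho}\|g\|_{L^q}$ with $1+1/p=1/\rho+1/q$; a scaling computation yields $\|G_t\|_{L^\rho}=C\,t^{-\frac n2(1-1/\rho)}$, and since $1-1/\rho=1/q-1/p$ and $\|\Theta_t\|_{L^\rho}\leq C\|G_t\|_{L^\rho}$ for small $t$, this factor contributes exactly $t^{-\frac n2(1/q-1/p)}$ (the large-$t$ regime being absorbed into $C$, which suffices for the local-existence application).

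For the second factor (smoothness gain) I would show that $\langle D\rangle^{s-r}e^{\Delta t/2}$ is bounded on $L^p$ with norm $O(t^{-(s-r)/2})$. Bounding the symbol by $\langle k\rangle^{s-r}\leq C(|k|^{s-r}+1)$ reduces this to the homogeneous operator $|D|^{\sigma}e^{\Delta t/2}$ with $\sigma=s-r$, whose convolution kernel obeys the exact scaling $K_t^\sigma(x)=t^{-(n+\sigma)/2}K_1^\sigma(x/\sqrt t)$, so that $\|K_t^\sigma\|_{L^1}=t^{-\sigma/2}\|K_1^\sigma\|_{L^1}$; here $K_1^\sigma$ is integrable because its symbol $|k|^\sigma e^{-|k|^2}$ is smooth with Gaussian decay away from the origin and only mildly singular at it, so Young's inequality delivers the $L^p$ bound $t^{-\sigma/2}$. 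For non-integer $\sigma$ or general $p\neq 2$ I would instead verify the Mikhlin--H\"ormander derivative bounds for the rescaled symbol $\langle k\rangle^{s-r}e^{-|k|^2t}$, obtaining an $L^p$-multiplier constant of size $O(t^{-(s-r)/2})$. Multiplying the two factors produces the total exponent $\kappa=\tfrac n2(1/q-1/p)+\tfrac12(s-r)$.

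The main obstacle is precisely this smoothness-gain step: whereas the integrability estimate is a clean Young's inequality, controlling the $L^p\to L^p$ norm of a fractional-derivative multiplier for non-integer $s-r$ and $p\neq2$ requires genuine harmonic analysis, and one must carefully track how the rescaling $k\mapsto\sqrt t\,k$ turns the symbol bounds into the correct power $t^{-(s-r)/2}$. A secondary technical point is the transfer from the Gaussian on $\mathbb{R}^n$ to the periodic kernel $\Theta_t$: the two agree to leading order as $t\to0^+$, which is the only regime in which the singular factor $t^{-\kappa}$ is informative, so the non-decaying zero Fourier mode and the large-$t$ behaviour merely inflate the constant $C$ and do not affect the stated estimate.
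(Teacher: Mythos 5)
The paper does not actually prove \cref{lemmaTaylor}: it is recalled verbatim from Taylor \cite{taylorIII}, p.~274, with no argument given. So your proposal is by necessity a different route from the paper's, but it is essentially the standard proof of the quoted result, and the architecture is sound: factor $e^{\Delta t}=e^{\Delta t/2}e^{\Delta t/2}$, get the integrability gain $t^{-\frac n2(1/q-1/p)}$ from Young's inequality with the periodised Gaussian, and the smoothing gain $t^{-(s-r)/2}$ from a multiplier estimate for $\langle D\rangle^{s-r}e^{\Delta t/2}$ on $L^p$. Three caveats. First, your step ``bounding the symbol by $\langle k\rangle^{s-r}\leq C(|k|^{s-r}+1)$ reduces this to the homogeneous operator'' is not a legitimate reduction for $p\neq 2$: pointwise domination between symbols does not imply domination of $L^p$ operator norms. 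The Mikhlin--H\"ormander argument you offer as a fallback is in fact the proof: after the dilation $k\mapsto\xi/\sqrt t$ the rescaled symbol $(t+|\xi|^2)^{(s-r)/2}e^{-|\xi|^2}$ satisfies Mikhlin bounds uniformly in $t\in(0,1]$, the multiplier norm is dilation-invariant, and transference (de Leeuw) carries the bound to $\mathbb{T}^n$; that chain should be promoted from alternative to main line. Second, on the torus the bound $Ct^{-\kappa}$ with $\kappa>0$ cannot hold uniformly for all $t>0$, since $e^{\Delta t}$ converges to the mean-value projection, whose $L^q\to L^p$ norm does not decay; the lemma must be read on a bounded time interval, as you correctly observe, and that is all \cref{thm:local} uses. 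Third, the equivalence $\|f\|_{W^{s,p}}\simeq\|\langle D\rangle^s f\|_{L^p}$ requires $1<p<\infty$, so the endpoint cases implicit in ``$p\geq q>0$'' are not covered by your reduction. It is worth noting that the paper only ever invokes the instance $p=q=2$, $r=-1$, $s=0$ (so $\kappa=\tfrac12$), for which Plancherel together with $\sup_k\langle k\rangle e^{-c|k|^2t}\lesssim t^{-1/2}$ gives a two-line elementary proof, bypassing all of the harmonic analysis your general argument requires.
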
 
	
	\begin{theorem}\label{thm:local} For each $u_0 \in L^2(\mathbb{T} )^N$ there exists a time $T>0$ and a unique mild solution \eqref{eq:mild} of \Cref{eq:model3} with 
		\[ u\in L^\infty((0,T), L^2(\mathbb{T} ))^N.\]
	\end{theorem}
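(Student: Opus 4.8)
The plan is to solve \eqref{eq:mild} by a Banach fixed-point argument in the space $X_T = L^\infty((0,T),L^2(\mathbb{T}))^N$ with norm $\|u\|_{X_T} = \sup_{0<t<T}\|u(t)\|_{L^2}$, applied to the Duhamel map
\[
\Phi(u)(t) = e^{D\Delta t}u_0 - \int_0^t e^{D\Delta(t-s)}\,\nabla\cdot\bigl(u\cdot\nabla(H\bar u)\bigr)\,\ud s ,
\]
whose fixed points are exactly the mild solutions. I would fix $R = 2\|u_0\|_{L^2}$ and show that for $T$ small enough $\Phi$ maps the closed ball $B_R = \{u\in X_T : \|u\|_{X_T}\le R\}$ into itself and is a contraction there. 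Since $e^{D\Delta t}$ is a contraction semigroup on $L^2$ (equivalently \cref{lemmaTaylor} with $q=p=2$, $r=s=0$, $\kappa=0$), the first term satisfies $\|e^{D\Delta t}u_0\|_{L^2}\le\|u_0\|_{L^2}$, so the whole game is to control the Duhamel integral.

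The central idea, and the one point that needs care, is that the advection term sits inside a divergence, so it costs one spatial derivative; this derivative must be absorbed into the smoothing of the heat semigroup. Because $e^{D\Delta\tau}$ acts componentwise as the diagonal semigroup $e^{D_i\Delta\tau}$ and commutes with $\partial_{x_k}$ on the torus, I would write $\partial_{x_k}e^{D_i\Delta\tau} = e^{D_i\Delta\tau}\partial_{x_k}$ and invoke \cref{lemmaTaylor} with $q=p=2$, $r=0$, $s=1$ (so $\kappa=\tfrac12$) to obtain $\|e^{D\Delta\tau}\nabla\cdot F\|_{L^2}\le C\tau^{-1/2}\|F\|_{L^2}$ for $F=u\cdot\nabla(H\bar u)$, with $C$ absorbing $\max_i D_i^{-1/2}$. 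The singularity $\tau^{-1/2}$ is integrable near $0$, which is precisely what makes the argument close.

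Next I would estimate $F$ itself. Writing $\|u\cdot\nabla(H\bar u)\|_{L^2}\le\|u\|_{L^2}\,\|\nabla(H\bar u)\|_{L^\infty}$ and applying \cref{l2:h1b} to each $\nabla\bar u_j=\nabla K\ast u_j$ gives $\|\nabla(H\bar u)\|_{L^\infty}\le C\|u\|_{L^2}$, with $C$ depending on $H$, $|\mathbb{T}|$ and $\|\nabla K\|_{L^\infty}$, hence $\|F\|_{L^2}\le C\|u\|_{L^2}^2$. Combining with the semigroup bound,
\[
\left\|\int_0^t e^{D\Delta(t-s)}\nabla\cdot F\,\ud s\right\|_{L^2}
\le C\|u\|_{X_T}^2\int_0^t(t-s)^{-1/2}\,\ud s
= 2C\,t^{1/2}\|u\|_{X_T}^2 ,
\]
so $\|\Phi(u)\|_{X_T}\le\|u_0\|_{L^2}+2CT^{1/2}\|u\|_{X_T}^2$, and choosing $T$ with $8CT^{1/2}\|u_0\|_{L^2}\le1$ keeps $\Phi(B_R)\subseteq B_R$. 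For the contraction I would use the bilinear identity $u\cdot\nabla(H\bar u)-v\cdot\nabla(H\bar v)=(u-v)\cdot\nabla(H\bar u)+v\cdot\nabla(H\overline{(u-v)})$ and repeat the same two estimates to get $\|\Phi(u)-\Phi(v)\|_{X_T}\le 2CT^{1/2}R\,\|u-v\|_{X_T}$, which is a strict contraction once $T$ is further shrunk so that $4CT^{1/2}\|u_0\|_{L^2}<1$.

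With both conditions met, the Banach fixed-point theorem yields a unique fixed point in $B_R$, i.e. a mild solution on $[0,T]$. Uniqueness within all of $X_T$ (not merely within $B_R$) then follows from a standard argument: any two mild solutions satisfy the same Lipschitz-type bound on a possibly smaller interval, forcing their difference to vanish, and this is propagated forward. The only genuinely delicate step is the derivative-loss in the nonlocal advection; once the smoothing estimate of \cref{lemmaTaylor} converts it into the integrable factor $(t-s)^{-1/2}$ and \cref{l2:h1b} bounds the nonlinearity purely in terms of $\|u\|_{L^2}$, the rest is routine.
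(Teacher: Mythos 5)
Your proposal is correct and follows essentially the same route as the paper's proof: a Banach fixed-point argument on the ball of radius $2\|u_0\|_{L^2}$ in $L^\infty((0,T),L^2(\mathbb{T}))^N$, with the divergence absorbed by the $Ct^{-1/2}$ heat-semigroup smoothing of \cref{lemmaTaylor}, the nonlinearity controlled via H\"older together with \cref{l2:h1b}, and the identical bilinear splitting for the contraction step. The only cosmetic differences are that you commute $\nabla$ through the semigroup and use the $L^2\to H^1$ smoothing where the paper invokes the dual $H^{-1}\to L^2$ estimate (these are equivalent), and that you spell out uniqueness in all of $X_T$ beyond the ball, a point the paper leaves implicit.
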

	\begin{proof}
		The proof uses a Banach fixed-point argument. Let $M:=2\|u_0\|_{L^2} $. We define a map  
		\[ v\mapsto Q v :=  e^{D\Delta t} u_0 - \int_0^t e^{D\Delta (t-s)} \nabla\cdot (v \cdot\nabla(H\bar v)) ds,\]for $v\in L^\infty((0,T), L^2(\mathbb{T} ))^N$.
		\\
		\\
		\underline{{\bf Step 1:} $Q$ maps a ball into itself:} Let $B_M(0)\subset L^2(\mathbb{T} )^N$  be the ball of radius $M$ in $L^2(\mathbb{T} )^N$. Let $v=(v_1,\dots,v_N)\in L^\infty((0,T_{min}), B_M(0))^N $, where $T_{min}$ will be determined later.  Writing $u_0=(u_{10},...,u_{N0})$, for each $T \in (0,T_{min})$ we have 
		\begin{align*}
		\|Qv_i\|_{L^2} &\leq  \|u_{i0}\|_{L^2} +  \left\|\int_0^T e^{D\Delta (T-s)}\nabla\cdot  (v_i \nabla ((H\bar v)_i)) ds\right\|_{L^2}\\
		&\leq \|u_{i0}\|_{L^2} +\int_0^T C (T-s)^{-\frac{1}{2}} \|v_i \nabla ((H\bar v)_i) \|_{L^2} ds \\
		&\leq  \|u_{i0}\|_{L^2} + 2C\sqrt{T} \sup_{0<t\leq T} \|v_i \nabla ((H\bar v)_i)\|_{L^2} .
		\end{align*}
		In the second inequality we used the regularizing property of the heat equation semigroup from $H^{-1}$ to $L^2$ with a norm $C t^{-\frac{1}{2}}$, as in \cref{lemmaTaylor}. Since  $ (H \bar v)_i= \sum_{j=1}^{N} h_{ij} K \ast v_j $, we continue the previous estimate as:
		\begin{align*}
		\|Qv_i\|_{L^2} 
		&\leq  \|u_{i0}\|_{L^2} +  2C\sqrt{T}\sup_{0<t\leq T} \left\| v_i \nabla \left( \sum_{j=1}^{N} h_{ij} K \ast v_j\right) \right\|_{L^2} \\
		&\leq  \|u_{i0}\|_{L^2} +  2C\sqrt{T}\sup_{0<t\leq T}  \sum_{j=1}^{N} |h_{ij}| \left\| v_i \nabla \left( K \ast v_j\right) \right\|_{L^2} \\
		&\leq  \|u_{i0}\|_{L^2} +  2C\sqrt{T}  \sum_{j=1}^{N} |h_{ij}| \sup_{0<t\leq T}  n\|v_i\|_{L^2} \left\| \nabla \left( K \ast v_j\right) \right\|_{L^{\infty}} \\
		&\leq  \|u_{i0}\|_{L^2} +  2C\sqrt{T} \|\nabla K\|_{L^{\infty}} |\mathbb{T}|^{1/2} \sum_{j=1}^{N} |h_{ij}| \sup_{0<t\leq T}  n\|v_i\|_{L^2} \|v_j\|_{L^{2}} 
		\end{align*}
		In the third inequality we used H\"older's inequality, and in the last one we used \cref{l2:h1b}. 
		From the previous estimate, we obtain
		\begin{align*}
		\|Qv\|_{L^2}& = \sum_{i=1}^{N}	\|Qv_i\|_{L^2} \\
		& \leq \sum_{i=1}^{N}  \|u_{i0}\|_{L^2} +  2C\sqrt{T} n|\mathbb{T}|^{1/2} \|\nabla K\|_{L^{\infty}}  \sum_{i,j=1}^{N} |h_{ij}| \sup_{0<t\leq T}  \|v_i\|_{L^2} \|v_j\|_{L^{2}} \\
		& \leq\|u_{0}\|_{L^2} +  2C\sqrt{T}n|\mathbb{T}|^{1/2} \|\nabla K\|_{L^{\infty}}  \|H\|_{\infty} \sup_{0<t\leq T}  \|v \|_{L^2}^2, 
		\end{align*}
		where $ \|H\|_{\infty}=\max_{i,j} |h_{ij}| $.	Notice that $\|u_0\|_{L^2} = \frac{M}{2}$, hence we can always find a time $T_1$  small enough such that 
		\[ \sup_{0<t\leq T_1} \|Qv\|_{L^2} \leq M, \]
		so that $ Q v \in L^\infty((0,T_1), B_M(0))^N. $
		\\
		\\
		\underline{{\bf Step 2:} $Q$ is a contraction for $T$ small enough:} Given $v_{1}=(v_{11},...,v_{1N}),v_{2}=(v_{21},...,v_{2N}) \in L^\infty((0,T_{min}), B_M(0))^N$, we compute for $T \in (0,T_{min})$ the following
		\begin{align*}
		\|Qv_{1i}-Qv_{2i}\|_{L^2} &= \left\|  \int_0^T e^{D\Delta (T-s)} \left[\nabla \cdot (v_{1i} \nabla ((H \bar v_1)_i)) - \nabla\cdot (v_{2i}\nabla((H\bar v_2)_i))\right]  ds  \right\|_{L^2}\\
		&\leq   \left\|\int_0^T e^{D\Delta (T-s)} \nabla\cdot ((v_{1i}-v_{2i}) \nabla ((H\bar v_1)_i) ds \right\|_{L^2} \\
		& \hspace*{1cm}+\left\| \int_0^T e^{D\Delta (T-s)} \nabla\cdot [v_{2i}\nabla(H(\bar v_{1i}-\bar v_{2i}))_i] ds \right\|_{L^2} \\
		&\leq  \int_0^T C(T-s)^{-1/2} \|(v_{1i}-v_{2i})\nabla((H \bar v_1)_i)\|_{L^2}ds \\
		& \hspace*{1cm}+\int_0^T C(T-s)^{-1/2} \|v_{2i} \nabla((H (\bar v_{1}-\bar v_{2}))_i)\|_{L^2}ds \\
		&\leq  2C\sqrt{T} \sup_{0<t\leq T}(\|(v_{1i}-v_{2i}) \nabla((H \bar v_{1})_i)\|_{L^2}+ \|v_{2i} \nabla((H (\bar v_1-\bar v_2))_i)\|_{L^2}) 
		\end{align*}
		In the second inequality we used the regularizing property of the heat equation semigroup from $H^{-1}$ to $L^2$ with a norm $C t^{-\frac{1}{2}}$, as in \cref{lemmaTaylor}. Since  $ (H \bar v_1)_i= \sum_{j=1}^{N} h_{ij} K \ast v_{1j} $ and $ (H \bar v_2)_i= \sum_{j=1}^{N} h_{ij} K \ast v_{2j} $ we continue the previous estimate as:
		\begin{align*}	
		\|Qv_{1i}-Qv_{2i}\|_{L^2} 
		&\leq  2C\sqrt{T} \sup_{0<t\leq T}\left(\left\|(v_{1i}-v_{2i}) \sum_{j=1}^{N} |h_{ij}| (\nabla K \ast\ v_{1j})\right\|_{L^2} \right.\\
		&\hspace*{1cm}\left. + \left\|v_{2i}  \sum_{j=1}^{N} |h_{ij}|( \nabla K \ast (v_{1j}- v_{2j}))\right\|_{L^2}\right) \\		
		&\leq  2C\sqrt{T} \sup_{0<t\leq T}(\|v_{1i}-v_{2i}\|_{L^2} n\sum_{j=1}^{N} |h_{ij}| \|\nabla K \ast\ v_{1j}\|_{L^\infty}\\
		&\hspace*{1cm} + \|v_{2i}\|_{L^2}  n\sum_{j=1}^{N} |h_{ij}| \|\nabla K \ast ( v_{1j}- v_{2j})\|_{L^\infty}) \\
		&\leq  2C\sqrt{T}\|H\|_\infty \|\nabla K\|_{L^{\infty}} |{\mathbb T}|^{1/2} n \sup_{0<t\leq T}\left(\|v_{1i}-v_{2i}\|_{L^2}  \sum_{j=1}^{N} \|v_{1j} \|_{L^2} \right. \\
		& \hspace*{1cm} \left. + \|v_{2i}\|_{L^2}  
		\sum_{j=1}^{N} \| v_{1j}- v_{2j}\|_{L^2} \right),
		\end{align*} 
		where $ \|H\|_{\infty}=\max_{i,j} |h_{ij}| $. In the second inequality we used H\"older's inequality, and in the last one we used \cref{l2:h1b}. 
		From the previous estimate, we obtain
		\begin{align*}	
		\|Qv_{1}-Qv_{2}\|_{L^2} =& \sum_{i=1}^{N} 	\|Qv_{1i}-Qv_{2i}\|_{L^2} \\
		&\leq 2 C\sqrt{T}\|H\|_\infty \|\nabla K\|_{L^{\infty}} |{\mathbb T}|^{1/2} n\sup_{0<t\leq T}\left(\sum_{i=1}^{N} \|v_{1i}-v_{2i}\|_{L^2}  \sum_{j=1}^{N} \|v_{1j} \|_{L^2} \right. \\
		& \hspace*{1cm} \left. + \sum_{i=1}^{N}  \|v_{2i}\|_{L^2}  \sum_{j=1}^{N} \| v_{1j}- v_{2j}\|_{L^2} \right) \\
		&\leq 2 C\sqrt{T}\|H\|_\infty \|\nabla K\|_{L^{\infty}} |{\mathbb T}|^{1/2} n\sup_{0<t\leq T}\left( \|v_{1}-v_{2}\|_{L^2} ( \|v_{1} \|_{L^2} + \|v_{1} \|_{L^2})\right) \\
		& \leq 4 M C\sqrt{T}\|H\|_\infty \|\nabla K\|_{L^{\infty}} |{\mathbb T}|^{1/2} n\sup_{0<t\leq T} \|v_{1}-v_{2}\|_{L^2}.
		\end{align*} 
		The last inequality is obtained from $v_1,v_2 \in L^\infty((0,T_{min}), B_M(0))^N$, so $\|v_1\|_{L^2},\|v_2\|_{L^2}\leq M$.
		For 
		\[ T< T_2 :=\frac{1}{|\mathbb T|(4MCn\| H \|_{\infty}\|\nabla K\|_{L^\infty})^2} \] 
		we have 
		\[\sup_{0<t\leq T} \|Qv_1-Qv_2\|_{L^2} < \sup_{0<t\leq T}\| v_1 - v_2\|_{L^2},\]
		which means $ Q v_1 - Q v_2 \in L^\infty((0,T_2), B_M(0))^N $.  Thus $Q$ is a strict contraction in $ L^\infty((0,T_{min}), B_M(0))^N $, where we can finally define $T_{min}$ as
		\[ T_{min} := \min\left\{ T_1, T_2\right\}.\]
		\underline{{\bf Step 3:}} The previous argument also shows that $Q$ is Lipschitz continuous, hence, by the Banach fixed point theorem, $Q$ has a unique fixed point for $T<T_{min}$. This fixed point is a mild solution of (\ref{eq:model2}) and it satisfies 
		\[ u\in L^\infty((0,T), L^2(\mathbb{T} ))^N\]
		for $T<T_{min}$. 
		The mild solution automatically satisfies the initial condition:
		\[ \lim_{t\to 0} u(x,t) = u_0(x).\]
	\end{proof}
	
	\subsection{Global existence in time}
	Let $ u $ be a mild solution of \Cref{eq:model3}. Our strategy moving forward will be to show that, for the period of time that $\|u\|_{L^1}$ remains bounded, solutions exist and grow at most exponentially in $L^2$.  We will then show that the statement `$\|u\|_{L^1}$ is unbounded' leads to a contradiction.
	
	With this in mind, we define a time $T_*$ as follows: if $\|u\|_{L^1}$ is bounded for all time, then let $T_*=\infty$.  Otherwise, $\|u\|_{L^1}\rightarrow \infty$ as $t \rightarrow T_{max}$ for some $T_{max} \in (0,\infty]$, so let $T_*$ be the earliest time at which $\|u\|_{L^1}=2\|u_{0}\|_{L^1}$.  Our objective will be to show that the case where $\|u\|_{L^1}\rightarrow \infty$ as $t \rightarrow T_{max}$  leads to a contradiction when $n=1$ (one spatial dimension), so that $\|u\|_{L^1}$ is bounded for all time.  This will enable us to prove that the solution from \cref{thm:local} is global in time when $n=1$.
	\begin{lemma}	
		\label{l:gradl1}
		Let $ u =(u_1,\dots,u_N)$ be a mild solution and ${ K}:\mathbb{T}\rightarrow {\mathbb R}$ be differentiable with $\nabla {K} \in L^\infty(\mathbb{T})$. Then there exists a constant $ \nu_i $ such that $\| \nabla (\mathcal K\ast u_i) \|_{L^{\infty}} \leq \nu_i$ for all $t<T_*$, $i\in \{1,\dots,N\}$.  If $\nu=\nu_1+\dots+\nu_N$ then $\| \nabla (\mathcal K\ast u) \|_{L^{\infty}} \leq \nu$.
	\end{lemma}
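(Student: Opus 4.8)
The plan is to bound each $\|\nabla(K\ast u_i)\|_{L^\infty}$ by the $L^1$ norm of $u_i$ via Young's convolution inequality, and then to note that $\|u_i\|_{L^1}$ is uniformly bounded on $[0,T_*)$ directly from the definition of $T_*$. Concretely, I would reuse the intermediate estimate already appearing in the proof of \cref{l2:h1b}, which by Young's convolution inequality reads
\[
\|\nabla(K\ast\varphi)\|_{L^\infty}=\|(\nabla K)\ast\varphi\|_{L^\infty}\leq\|\nabla K\|_{L^\infty}\|\varphi\|_{L^1}
\]
for any $\varphi\in L^1(\mathbb{T})$. Applying this pointwise in $t$ with $\varphi=u_i(\cdot,t)$ gives $\|\nabla(K\ast u_i)\|_{L^\infty}\leq\|\nabla K\|_{L^\infty}\|u_i\|_{L^1}$, and since $\nabla K\in L^\infty(\mathbb{T})$ by hypothesis, the whole task reduces to controlling $\|u_i\|_{L^1}$ uniformly for $t<T_*$.

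To do this I would invoke the definition of $T_*$ itself. By construction either $\|u\|_{L^1}$ is bounded for all time (the case $T_*=\infty$), or $T_*$ is the earliest time at which $\|u\|_{L^1}=2\|u_0\|_{L^1}$, so that $\|u\|_{L^1}\leq 2\|u_0\|_{L^1}$ throughout $[0,T_*)$. In either case $M_1:=\sup_{0\leq t<T_*}\|u\|_{L^1}$ is finite, and since the components contribute nonnegatively to the sum defining $\|u\|_{L^1}$ we have $\|u_i\|_{L^1}\leq\|u\|_{L^1}\leq M_1$ for each $i$. Setting $\nu_i:=\|\nabla K\|_{L^\infty}\sup_{0\leq t<T_*}\|u_i\|_{L^1}$ then yields $\|\nabla(K\ast u_i)\|_{L^\infty}\leq\nu_i$ on $[0,T_*)$, and with $\nu=\nu_1+\dots+\nu_N$ the combined bound follows since $\|\cdot\|_{(L^\infty)^N}$ is the maximum over components: $\|\nabla(K\ast u)\|_{L^\infty}=\max_i\|\nabla(K\ast u_i)\|_{L^\infty}\leq\max_i\nu_i\leq\nu$.

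I do not expect a genuine obstacle here. The one point needing care is the uniform-in-time $L^1$ bound, and this is supplied essentially for free by the definition of $T_*$, which is phrased directly in terms of $\|u\|_{L^1}$. In particular, no appeal to positivity or to conservation of mass is required at this stage, so the argument collapses to Young's inequality together with the bookkeeping built into the definition of $T_*$.
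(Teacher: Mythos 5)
Your proof is correct and follows essentially the same route as the paper's: Young's convolution inequality to get $\|\nabla(K\ast u_i)\|_{L^\infty}\leq\|\nabla K\|_{L^\infty}\|u_i\|_{L^1}$, the uniform $L^1$ bound on $[0,T_*)$ read off directly from the definition of $T_*$, and the combined bound from the definition of $\nu$ and the norm conventions. Your explicit handling of the two cases in the definition of $T_*$ and the observation that positivity and mass conservation are not needed are slightly more careful than the paper's two-line argument, but the substance is identical.
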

	\begin{proof}
		Applying Young's convolution inequality, we have $ \| \nabla (\mathcal K\ast u_i) \|_{L^{\infty}} \leq \|\nabla \mathcal K \|_{L^{\infty}} \|u_i\|_{L^1} $. By the definition of $T_*$, $\|u_i\|_{L^1}(t)$ is bounded for $ t<T_* $.  Thus there exists a constant $ \nu_i $ such that  $ \|\nabla \mathcal K \|_{L^{\infty}} \|u_i\|_{L^1} \leq \nu_i$.  The result $\| \nabla (\mathcal K\ast u) \|_{L^{\infty}} \leq \nu$ follows from the definitions of $\nu$ and the norm on $(L_1)^N$. 
	\end{proof}
	
	\begin{lemma} 
		\label{lem:local}
		Assume $u_0\in H^2(\mathbb{T} )^N$. Then the mild solution from \cref{thm:local} satisfies
		\[ u\in C^1((0,T_*), L^2(\mathbb{T} ))^N \cap C^0([0,T_*), H^2(\mathbb{T} ))^N\]
		In one spatial dimension this implies 
		\[ u\in C^1((0,T_*), L^2(\mathbb{T} ))^N \cap C^0([0,T_*), C^2(\mathbb{T} ))^N,\]
		and mild solutions are classical up to time $T_*$. 
	\end{lemma}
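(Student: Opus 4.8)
The plan is to recast \Cref{eq:mild} in the framework of analytic semigroups and to bootstrap the spatial regularity of the mild solution supplied by \cref{thm:local}, exploiting the fact that every derivative acting on the non-local term $H\bar u$ can be transferred onto the smooth kernel $K$. Concretely, write the nonlinearity component-wise as
\[ f_i(u) = -\nabla\cdot\bigl(u_i\,\nabla(H\bar u)_i\bigr) = -\nabla u_i\cdot\nabla(H\bar u)_i - u_i\,\Delta(H\bar u)_i, \]
and observe that, because $(H\bar u)_i=\sum_j h_{ij}K\ast u_j$, the two factors $\nabla(H\bar u)_i$ and $\Delta(H\bar u)_i$ are controlled in $L^\infty$ by low-order norms of $u$: \cref{l2:h1b} gives $\|\nabla(H\bar u)_i\|_{L^\infty}\lesssim\|u\|_{L^2}$ and \cref{l:deltainfty} gives $\|\Delta(H\bar u)_i\|_{L^\infty}\lesssim\|\nabla u\|_{L^2}$. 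Hence $\|f(u)\|_{L^2}\lesssim\|u\|_{H^1}^2$, and a bilinear splitting of the same type as in \cref{thm:local} shows that $f$ is locally Lipschitz as a map $H^1(\mathbb{T})^N\to L^2(\mathbb{T})^N$.

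With this in hand I would take $X=L^2(\mathbb{T})^N$ and $A=-D\Delta$ (shifted by the identity so that it is invertible, the zero Fourier mode being harmless by conservation of $\int_\Omega u_i$), which is sectorial and self-adjoint with $D(A)=H^2(\mathbb{T})^N$ and fractional power space $X^{1/2}=H^1(\mathbb{T})^N$. The map $f:X^{1/2}\to X$ is locally Lipschitz by the previous paragraph, and the mild solution of \cref{thm:local} is exactly the mild solution in this scale. The standard regularity theory for semilinear parabolic equations with a sectorial generator (Henry, Pazy) then upgrades it: since $u_0\in H^2=D(A)$, the solution satisfies
\[ u\in C^1\bigl((0,T_*),L^2(\mathbb{T})\bigr)^N\cap C^0\bigl([0,T_*),H^2(\mathbb{T})\bigr)^N, \]
is a strong solution of \Cref{eq:model3} for $t\in(0,T_*)$, and attains the initial datum in $H^2$. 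The heat semigroup smoothing of \cref{lemmaTaylor} is what drives this, together with local H\"older continuity in time of $s\mapsto f(u(s))$, which follows once $u$ is H\"older into $X^{1/2}$.

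For the one-dimensional refinement I would bootstrap once more. In $n=1$ one has the embedding $H^2(\mathbb{T})\hookrightarrow C^1(\mathbb{T})$, so $H^2$ is a Banach algebra and the flux $u_i\,\partial_x(H\bar u)_i$ lies in $H^2$ whenever $u\in H^2$ --- here one again moves a derivative onto $K$, using $\partial_x^2\bigl((\partial_x K)\ast u_j\bigr)=(\partial_x K)\ast\partial_x^2 u_j\in L^\infty$, which needs only the assumed two derivatives of $K$. Consequently $f:H^2(\mathbb{T})^N\to H^1(\mathbb{T})^N$ is locally Lipschitz, and repeating the sectorial argument in the scale $X=H^1$, $D(A)=H^3$, $X^{1/2}=H^2$ gives $u\in C^0\bigl((0,T_*),H^3(\mathbb{T})\bigr)^N$. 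Since $H^3(\mathbb{T})\hookrightarrow C^2(\mathbb{T})$ in one dimension, the solution is $C^2$ in space and $C^1$ in time, hence classical; continuity up to $t=0$ in $C^2$ is then supplied by the hypothesis $u_0\in C^2(\mathbb{T})^N$ together with the equation $D_i\,\partial_x^2 u_i=\partial_t u_i-f_i(u)$.

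The main obstacle is the non-integrable singularity $(t-s)^{-1}$ produced by \cref{lemmaTaylor} when mapping $L^2\to H^2$ directly inside the Duhamel integral \Cref{eq:mild}: a naive estimate of $\int_0^t e^{D\Delta(t-s)}f(u(s))\,ds$ in $H^2$ fails. This is precisely why the argument must be run in the fractional-power space $X^{1/2}=H^1$ (where $f$ lands in $X=L^2$ and the relevant exponent becomes integrable) and why one needs H\"older-in-time continuity of the nonlinearity. The most delicate point is upgrading the interior regularity, valid a priori only for $t>0$, to continuity of $u$ in the strong $H^2$ (respectively $C^2$) norm up to the initial time, which is where the strength of the initial datum ($u_0\in H^2$, resp.\ $u_0\in C^2$) is essential.
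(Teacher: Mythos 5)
Your abstract semigroup framework (sectorial $A=-D\Delta+I$ on $L^2$, fractional power space $X^{1/2}=H^1$, $f$ locally Lipschitz $H^1\to L^2$ via \cref{l2:h1b} and \cref{l:deltainfty}, then a second bootstrap $f:H^2\to H^1$ with $H^3\hookrightarrow C^2$ in one dimension) is a legitimate and genuinely different route from the paper, which instead proves $L^2$ and $H^1$ energy estimates by direct integration by parts and Gr\"onwall, and then reads the $C^1$-in-time and $H^2$-in-space regularity directly off the Duhamel formula \cref{eq:mild}. Your local Lipschitz verifications are correct, and you rightly identify the $(t-s)^{-1}$ singularity and the continuity at $t=0$ as the technical pressure points of the semigroup approach. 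However, there is a genuine gap that your write-up does not address: the time $T_*$ in \cref{lem:local} is \emph{not} the maximal existence time of the $H^1$ (or $H^2$) semiflow. It is defined in the paper through the $L^1$ norm --- the earliest time at which $\|u\|_{L^1}=2\|u_0\|_{L^1}$, or $\infty$ if $\|u\|_{L^1}$ stays bounded. The Henry--Pazy theory you invoke yields regularity only up to a maximal time $T_{\max}$ characterized by blow-up of $\|u(t)\|_{H^1}$, and since your nonlinearity is quadratic on $H^1$, nothing in your argument excludes $T_{\max}<T_*$. Your assertion that the regularity holds on all of $(0,T_*)$ is therefore unjustified as written.

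The paper closes exactly this gap with a priori estimates valid for all $t<T_*$: it is here that \cref{l:gradl1} enters, giving the uniform bound $\|\nabla(K\ast u_j)\|_{L^\infty}\leq\|\nabla K\|_{L^\infty}\|u_j\|_{L^1}\leq\nu$ precisely because $\|u\|_{L^1}$ is bounded before $T_*$ by construction; this yields $\|u\|_{L^2}\leq\|u_0\|_{L^2}e^{nt/2\ep}$ by Gr\"onwall, and a second energy estimate (using \cref{l2:h1b} and \cref{l:deltainfty}) then bounds $\|\nabla u\|_{L^2}$ on $[0,T_*)$. You never invoke \cref{l:gradl1} or the $L^1$ bound at all; your estimate $\|\nabla(H\bar u)_i\|_{L^\infty}\lesssim\|u\|_{L^2}$ suffices for local Lipschitz continuity but not for global control on $[0,T_*)$. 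The fix is to graft the paper's two Gr\"onwall estimates onto your framework and conclude $T_{\max}\geq T_*$ by the standard continuation criterion; without this, the interval $(0,T_*)$ in the conclusion is unproven, and the downstream logic collapses, since \cref{lem:pos2} and \cref{thm:global} need regularity and positivity up to the $L^1$-defined time $T_*$ to derive the contradiction that gives global existence. (Two lesser points: you should also identify your $H^1$-scale solution with the mild solution of \cref{thm:local} via uniqueness of the $L^2$ fixed point, and note that continuity into $C^2$ at $t=0$ uses the hypothesis $u_0\in C^2$ from \cref{thm:main}, since $u_0\in H^2$ alone only gives $C^{1}$ in one dimension --- both are repairable, unlike the main gap if left unaddressed.)
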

	\begin{proof} As we are dealing with a system of equations $u=(u_1, \dots, u_N)$, we consider each component separately. 
		For each of the components $u_i$ for $i=1,\dots,N$ we multiply the $i$-th row of \Cref{eq:model3} by $u_i$ and integrate:
		\begin{align*}
		\frac{1}{2}\frac{d}{dt} \|u_i\|_{L^2}^2 &= \int_\mathbb{T}  u_i u_{it} dx\\
		& = \int_\mathbb{T}  D_i u_i \Delta u_i dx - \int_\mathbb{T}  u_i\nabla\cdot  (u_i \nabla((H\bar u)_i))  dx \\
		& =  -\int_\mathbb{T}  D_i|\nabla u_i|^2 dx + \int_\mathbb{T} u_i  \nabla u_i \cdot \nabla ((H\bar u)_i) dx \\
		& =  -\int_\mathbb{T}  D_i\sum_{h=1}^{n} (\partial_{x_h} u_i)^2 dx + \int_\mathbb{T} u_i \sum_{h=1}^{n} (\partial_{x_h} u_i) \partial_{x_h} ((H\bar u)_i) dx \\
		& \leq \sum_{h=1}^{n} \left( -\int_\mathbb{T}  D_i(\partial_{x_h} u_i)^2 dx  + \| \partial_{x_h} ((H\bar u)_i)\|_{L^{\infty}} \int_\mathbb{T} |u_i \partial_{x_h} u_i| dx	\right)\\
		& = -\int_\mathbb{T}  D_i\left|\nabla u_i\right|^2 dx  + \| \nabla ((H\bar u)_i)\|_{L^{\infty}} \int_\mathbb{T} |u_i \nabla u_i| dx	\\
		& = -\int_\mathbb{T}  D_i\left|\nabla u_i\right|^2 dx  + \|\sum_{j=1}^{N} h_{ij} \nabla(K \ast u_j)\|_{L^{\infty}} \int_\mathbb{T} |u_i \nabla u_i| dx	\\
		& \leq -\int_\mathbb{T}  D_i\left|\nabla u_i\right|^2 dx  +  \| H\|_{\infty} \sum_{j=1}^{N} \| \nabla(K \ast u_j)\|_{L^{\infty}} \int_\mathbb{T} |u_i \nabla u_i| dx	\\
		& \leq  -\int_\mathbb{T}  D_i\left|\nabla u_i\right|^2 dx  + \| H\|_{\infty} \nu \int_\mathbb{T} |u_i \nabla u_i| dx	\\
		& \leq  \left(- D_i+\frac{\varepsilon}{2} \left(\|H\|_{\infty}   \nu\right)^2\right)\int_\mathbb{T} \left|\nabla u_i\right|^2 dx  + \frac{n}{2\varepsilon} \int_\mathbb{T} |u_i|^2 dx\\
		\end{align*}
		where $ \|H\|_{\infty}=\max_{i,j} |h_{i,j}| $. In the third equality we used integration by parts and the periodic boundary conditions, the first inequality uses H\"older's inequality, the third inequality uses \Cref{l:gradl1}, which is valid for $t<T_*$, and the fourth inequality uses Young's inequality.
		
		Now we choose $\ep$ such that $- D_i+\frac{\varepsilon}{2} \left(\|H\|_{\infty}  \nu\right)^2<0$ for all $i,j=1,\dots,N$ so that 
		\[  \frac{1}{2}\frac{d}{dt} \|u_i\|_{L^2}^2 \leq \frac{n}{2\ep} \|u_i\|_{L^2}^2. \]
		Applying Gr\"onwall's Lemma, we find 
		\[ \|u_i\|_{L^2} \leq \|u_{i0}\|_{L^2} e^{\frac{{n}t}{2\ep}}.\]
		Finally, we observe that
		\[\sum_{i=1}^{N} \|u_i\|_{L^2} \leq\sum_{i=1}^{N} \|u_{i0}\|_{L^2} e^{\frac{{n}t}{2\ep}},\]
		from which we obtain
		\begin{align}
		\label{eq:L2est}
		\|u\|_{L^2} \leq \|u_{0}\|_{L^2} e^{\frac{{n}t}{2\ep}}.
		\end{align}
		Hence solutions exist and grow at most exponentially in $L^2$ up to time $T_*$. 
		\\
		\\
		Now we find an estimate in $H^1$ for each component $u_i$, $i=1,\dots, N$:
		
		\begin{align*}
		\frac{1}{2}\frac{d}{dt}  \|\nabla u_i\|_{L^2}^2 =& - \int_\mathbb{T}  (\nabla u_{it})\cdot(\nabla u_i) dx \\
		&= - \int_\mathbb{T}  u_{it} \Delta u_i dx \\
		&= - \int_\mathbb{T}  D_i (\Delta u_i)^2 dx +  \int_\mathbb{T} \Delta u_i \nabla\cdot (u_i \nabla ((H \bar u)_i)) dx\\
		&= \left(-D_i+\frac{\ep_2}{2}\right) \int_\mathbb{T} (\Delta u_i)^2 dx +  \frac{1}{2\ep_2}\int_\mathbb{T} ( \nabla\cdot (u_i \nabla ((H \bar u)_i)))^2 dx,
		\end{align*}
		where we used Young's inequality to obtain the last estimate. We now chose $\ep_2>0$ small enough such that  $-D_i + \frac{\ep_2}{2}<0 $ for every $i=1,\dots,N$. We then continue the previous estimate as 
		\begin{align*}
		\frac{1}{2}\frac{d}{dt} \|\nabla u_i\|_{L^2}^2 &\leq  \frac{1}{2\ep_2}\|\nabla \cdot(u_i\nabla((H\bar{u})_i))\|_{L^2}^2 \\
		& = \frac{1}{2\ep_2}\left\|\sum_{h=1}^n \partial_{x_h} \left(u_i \partial_{x_h} \sum_{j=1}^N h_{ij} K\ast u_j\right)\right\|_{L^2}^2 \\
		&\leq \frac{1}{2\ep_2}\left\|\sum_{h=1}^n (\partial_{x_h} u_i) \partial_{x_h} \sum_{j=1}^N h_{ij} K\ast u_j+\sum_{h=1}^n u_i \partial_{x_h}^2 \sum_{j=1}^N h_{ij} K\ast u_j\right\|_{L^2}^2 \\
		&\leq \frac{1}{2\ep_2}\left(\left\|\sum_{h=1}^n (\partial_{x_h} u_i) \partial_{x_h} \sum_{j=1}^N h_{ij} K\ast u_j\right\|_{L^2}+\left\|\sum_{h=1}^n u_i \partial_{x_h}^2 \sum_{j=1}^N h_{ij} K\ast u_j\right\|_{L^2}\right)^2 \\
		&\leq \frac{1}{\ep_2}\left(\left\|\sum_{h=1}^n (\partial_{x_h} u_i) \partial_{x_h} \sum_{j=1}^N h_{ij} K\ast u_j\right\|_{L^2}^2+\left\|\sum_{h=1}^n u_i \partial_{x_h}^2 \sum_{j=1}^N h_{ij} K\ast u_j\right\|_{L^2}^2\right) \\
		&\leq \frac{1}{\ep_2}\left(\sum_{h=1}^n \|\partial_{x_h} u_i\|_{L^2}  \sum_{j=1}^N |h_{ij}| \|\partial_{x_h}(K\ast u_j)\|_{L^\infty}\right)^2\\
		&\quad+\frac{1}{\ep_2}\left(\|u_i\|_{L^2}  \sum_{j=1}^N \sum_{h=1}^n |h_{ij}| \|\partial_{x_h}^2(K\ast u_j)\|_{L^\infty}\right)^2 \\
		\end{align*}
		\begin{align*}
		&\leq \frac{1}{\ep_2}\left(\|\nabla u_i\|_{L^2} \|H\|_\infty \sum_{j=1}^N  \|\nabla(K\ast u_j)\|_{L^\infty}\right)^2\\
		&\quad+\frac{1}{\ep_2}\left(\|u_i\|_{L^2} \|H\|_\infty \sum_{h=1}^n \sum_{j=1}^N  \|(\partial_{x_h}K)\ast (\partial_{x_h}u_j)\|_{L^\infty}\right)^2 \\
		&\leq \frac{1}{\ep_2}\left(\|\nabla u_i\|_{L^2} \|H\|_\infty \sum_{j=1}^N  \|\nabla K\|_{L^\infty} \|u_j\|_{L^2}|{\mathbb T}|^{1/2}\right)^2\\
		&\quad+\frac{1}{\ep_2}\left(\|u_i\|_{L^2} \|H\|_\infty \sum_{h=1}^n \sum_{j=1}^N  \|\partial_{x_h}K\|_{L^\infty} \|\partial_{x_h}u_j\|_{L^1}\right)^2 \\
		&\leq \frac{1}{\ep_2}\left(\|\nabla u_i\|_{L^2} \|H\|_\infty |{\mathbb T}|^{1/2}\sum_{j=1}^N  \|\nabla K\|_{L^\infty} \|u_j\|_{L^2}\right)^2\\
		&\quad+\frac{1}{\ep_2}\left(\|u_i\|_{L^2} \|H\|_\infty |{\mathbb T}|^{1/2} \sum_{j=1}^N  \|\nabla K\|_{L^\infty} \|\nabla u_j\|_{L^2}\right)^2 \\
		&\leq \frac{1}{\ep_2} \|H\|_\infty^2 |{\mathbb T}|\|\nabla K\|_{L^\infty}^2\left(\|\nabla u_i\|_{L^2}^2\left(\sum_{j=1}^N   \|u_j\|_{L^2}\right)^2+\|u_i\|_{L^2}^2\left(\sum_{j=1}^N   \|\nabla u_j\|_{L^2}\right)^2\right)\\
		&\leq \frac{N}{\ep_2} \|H\|_\infty^2 |{\mathbb T}|\|\nabla K\|_{L^\infty}^2\left(\|\nabla u_i\|^2_{L^2}\sum_{j=1}^N   \|u_j\|_{L^2}^2+\|u_i\|^2_{L^2}\sum_{j=1}^N   \|\nabla u_j\|_{L^2}^2\right),
		\end{align*}
		in which we have used Young's inequality in the fourth inequality, \cref{l2:h1b} in the seventh, \cref{l:deltainfty} in the eighth, and Young's inequality in the ninth.
		\\
		\\
		Taking the sum over all the components $i \in \{1,\dots,N\}$, we have
		\begin{align*}
		\frac{1}{2}\frac{d}{dt} \sum_{i=1}^{N} \|\nabla u_i\|_{L^2}^2 
		& \leq \frac{2N}{\ep_2} \|H\|_\infty^2 |{\mathbb T}|\|\nabla K\|_{L^\infty}^2\sum_{i=1}^N\|\nabla u_i\|_{L^2}^2\sum_{j=1}^N \|u_j\|_{L^2}^2.
		\end{align*}
		By defining
		\begin{align*}
		A=\frac{4N}{\ep_2} \|H\|_\infty^2 |{\mathbb T}|\|\nabla K\|_{L^\infty}^2\sum_{j=1}^N \|u_{0j}\|_{L^2}^2,
		\end{align*}
		and using \eqref{eq:L2est}, we arrive at
		\begin{align*}
		\frac{1}{2}\frac{d}{dt} \sum_{i=1}^{N} \|\nabla u_i\|_{L^2}^2 	& \leq \frac{A}{2} e^{\frac{nt}{2\ep}}\sum_{i=1}^N \|\nabla u_i\|_{L^2}^2.
		\end{align*}
		Applying Gr\"onwall's Lemma, we have
		\begin{equation*}
		\sum_{i=1}^{N} \|\nabla u_i\|_{L^2}^2(t) \leq  \sum_{i=1}^{N} \|\nabla u_{i0}\|_{L^2}^2 \exp\left(A\int_0^t \exp\left({\frac{ns}{2\ep}}\right)ds\right),
		\end{equation*}
		for each time $t<T_*$.  Thus solutions remain bounded in $H^1({\mathbb T})$ until time $T_*$.
		
		Now let us consider the claim:
		\[ u\in \underbrace{C^1((0,T_*), L^2(\mathbb{T} ))^N}_{(I)} \cap \underbrace{C^0([0,T_*), H^2(\mathbb{T} ))^N}_{(II)}.\]
		Looking again at the mild formulation in \Cref{eq:mild}, we have that $u\in H^1$, $\nabla(H\bar u)\in H^1$ and the integral term is in $H^1$. The first term involves the heat equation semigroup and the initial condition, and by the classical theory of the linear heat equation, the term $e^{D\Delta t} u_0$ is in $H^1$ and differentiable in time. Hence also $u_t$ exists and is in $L^2$. This explains (I). 
		Finally, writing down the equation once more:
		\[ u_t = D\Delta u - \nabla\cdot (u \nabla\cdot(H\bar u))\]
		we now know that $u_t$ is in $L^2$ and the non-local term as well. Hence $\Delta u\in L^2$, which implies (II). 
		
		In one spatial dimension, we also have the Sobolev embedding from $H^2$ to $C^1$. Indeed, we can use this to show that solutions are in $C^2$ for $n=1$.  First note that \[((H\bar{u})_i)_x=\sum_{j=1}^N h_{ij}\frac{\partial K}{\partial x}\ast u_i,\] and \[((H\bar{u})_i)_{xx}=\sum_{j=1}^N h_{ij}\frac{\partial K}{\partial x}\ast \frac{\partial u_i}{\partial x},\]
		which are both continuous.  Therefore $[u_i((H\bar{u})_i)_x]_x=u_{ix}((H\bar{u})_i)_x+u_{i}((H\bar{u})_i)_{xx}$ is continuous.  It follows from the mild formulation in \Cref{eq:mild} that $u_{it}$ is continuous.  Consequently, $D_iu_{ixx}=u_{it}+[u_i((H\bar{u})_i)_x]_x$ is continuous, so $u_i$ is in $C^2({\mathbb T})$ (where ${\mathbb T}=[0,L]$ here, since $n=1$).
	\end{proof}

	\begin{lemma}
		\label{lem:pos2}Consider the solution from \cref{lem:local} in one spatial dimension, so that $n=1$, $\mathbb{T}=[0,L]$, and $ u\in C^1((0,T_*), L^2(\mathbb{T} ))^N \cap C^0((0,T_*), C^2(\mathbb{T} ))^N $. Let $u_0\in C^2(\mathbb{T} )^N$ such that $u_0(x)>0$ for $x \in \mathbb{T} $.  Then $u(x,t)>0$ for $x \in \mathbb{T} $ and $ t<T_* $. 
	\end{lemma}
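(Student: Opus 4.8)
The plan is to read each component equation as a scalar linear parabolic equation whose coefficients are \emph{frozen} by the solution already constructed in \cref{lem:local}, and then to extract positivity from a parabolic minimum principle. Fix $i\in\{1,\dots,N\}$ and $T<T_*$. Expanding the $i$-th row of \Cref{eq:model3} in one dimension gives
\[ u_{it} = D_i u_{ixx} - b_i(x,t)\,u_{ix} - c_i(x,t)\,u_i, \qquad b_i := ((H\bar u)_i)_x,\quad c_i := ((H\bar u)_i)_{xx}. \]
Since the solution is already in hand, $b_i$ and $c_i$ are \emph{known} functions; by the computations at the end of the proof of \cref{lem:local} they equal $\sum_j h_{ij}\,K_x\ast u_j$ and $\sum_j h_{ij}\,K_x\ast u_{jx}$, both continuous on $\mathbb{T}\times[0,T]$. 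Moreover \cref{l2:h1b} bounds $b_i$ and \cref{l:deltainfty} bounds $c_i$ uniformly on $[0,T]$, the $H^1$-bound established in \cref{lem:local} controlling the right-hand sides; in particular $M:=\sup_{\mathbb{T}\times[0,T]}|c_i|<\infty$. Thus each $u_i$ solves a scalar, uniformly parabolic, linear equation with bounded continuous coefficients on the boundaryless domain $\mathbb{T}$, and the inter-species coupling is entirely absorbed into the (known) coefficients.

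First I would prove nonnegativity by a first-touching-time argument rendered strict through a perturbation. For $\epsilon>0$ and $\mu>M$ set $z_\epsilon := u_i + \epsilon e^{\mu t}$. A direct substitution shows
\[ z_{\epsilon,t} - D_i z_{\epsilon,xx} + b_i z_{\epsilon,x} + c_i z_\epsilon = \epsilon e^{\mu t}(\mu + c_i) > 0, \]
so $z_\epsilon$ is a strict supersolution with $z_\epsilon(\cdot,0)=u_{i0}+\epsilon>0$. If $z_\epsilon$ vanished somewhere on $\mathbb{T}\times[0,T]$, continuity would provide a first time $t_0\in(0,T]$ and a point $x_0$ with $z_\epsilon(x_0,t_0)=0$ and $z_\epsilon>0$ for $t<t_0$. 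As $\mathbb{T}$ has no boundary, $x_0$ is an interior spatial minimum, giving $z_{\epsilon,x}(x_0,t_0)=0$ and $z_{\epsilon,xx}(x_0,t_0)\ge0$, while $z_\epsilon(x_0,\cdot)$ decreasing to $0$ from above forces $z_{\epsilon,t}(x_0,t_0)\le0$. Evaluating the displayed strict inequality at $(x_0,t_0)$ yields $z_{\epsilon,t}(x_0,t_0) > D_i z_{\epsilon,xx}(x_0,t_0)\ge0$, a contradiction. Hence $z_\epsilon>0$ on $\mathbb{T}\times[0,T]$, and letting $\epsilon\to0$ gives $u_i\ge0$.

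To upgrade to strict positivity I would remove the indefinite sign of $c_i$ via the substitution $w_i:=e^{-\mu t}u_i$, which satisfies $w_{it} - D_i w_{ixx} + b_i w_{ix} + (c_i+\mu)w_i = 0$ with $c_i+\mu>0$, $w_i\ge0$, and $w_i(\cdot,0)=u_{i0}>0$. Were $u_i$ (equivalently $w_i$) to vanish at some interior point $(x_1,t_1)$ with $t_1>0$, this would be a nonpositive interior minimum of a solution to a parabolic equation with nonnegative zero-order coefficient, so the strong parabolic maximum principle would force $w_i\equiv0$ on the time-connected set $\mathbb{T}\times[0,t_1]$, contradicting $w_i(\cdot,0)>0$. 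Therefore $u_i(x,t)>0$ for $x\in\mathbb{T}$ and $0<t\le T$; together with $u_i(\cdot,0)=u_{i0}>0$ and the arbitrariness of $T<T_*$, this establishes positivity of every component on $[0,T_*)$. The main obstacle is precisely the indefinite sign of $c_i$ together with the degenerate equality case of the bare minimum principle, where $z_{\epsilon,t}=0$ produces no contradiction; these are handled, respectively, by the exponential shift $e^{-\mu t}$ and by the strict supersolution perturbation $\epsilon e^{\mu t}$, after which only standard maximum-principle machinery is required.
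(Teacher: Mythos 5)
Your proof is correct, and at its core it shares the paper's strategy---follow a component to the first time it touches zero, evaluate the PDE there, and use $u_i=u_{ix}=0$ to eliminate the advection terms---but you execute it with genuinely different and more robust machinery. The paper's proof asserts strict inequalities $u_{ixx}(t_0,x_0)>0$ and $u_{it}(t_0,x_0)<0$ at the first touching point; neither is justified in general, since a degenerate touch with $u_{ixx}(t_0,x_0)=0$ and $u_{it}(t_0,x_0)=0$ produces no contradiction in the bare argument (the paper only excludes the case $u_i(t_0,\cdot)\equiv 0$ via mass conservation). Your strict-supersolution perturbation $z_\epsilon=u_i+\epsilon e^{\mu t}$ with $\mu>\sup|c_i|$ closes exactly this gap: the frozen-coefficient viewpoint $u_{it}-D_iu_{ixx}+b_iu_{ix}+c_iu_i=0$, with $b_i=\sum_j h_{ij}K_x\ast u_j$ and $c_i=\sum_j h_{ij}K_x\ast u_{jx}$ continuous and bounded via \cref{l2:h1b}, \cref{l:deltainfty} and the $L^2$ and $H^1$ bounds of \cref{lem:local}, turns the touching argument into a clean weak minimum principle yielding $u_i\ge 0$, and the exponential shift $w_i=e^{-\mu t}u_i$ together with the strong parabolic minimum principle (zero-order coefficient $c_i+\mu>0$, connected spatial domain $\mathbb{T}$) upgrades this to strict positivity---a step the paper's argument, even once repaired by your perturbation, does not supply, since the repaired first-touch argument alone only gives nonnegativity. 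The regularity you invoke ($u_i$, $u_{ix}$, $u_{ixx}$, $u_{it}$ continuous) is precisely what \cref{lem:local} provides and is the same regularity the paper's own proof implicitly uses, so nothing extra is assumed. In short: same skeleton, but your version is rigorous where the paper's is heuristic, at the modest cost of appealing to the strong maximum principle for linear parabolic equations with bounded continuous coefficients.
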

	\begin{proof}
		We let $u=(u_1,\dots,u_N)$ and work with each component separately. Assume that there is a first time $t_0>0$ such that the solution for $u_i$ becomes zero at a point $x_0$. We can rule out the case that $u_i(t_0, x) \equiv 0$, since the system (\ref{eq:model3}) conserves total mass. Then we have
		\[ u(t_0, x_0) =0, \quad u_{ix}(t_0, x_0) =0,\quad u_{ixx}(t_0, x_0) >0, \quad u_{it}(t_0, x_0)<0.\]
		System (\ref{eq:model1}) evaluated at $(t_0, x_0)$ becomes 
		\begin{eqnarray*}
			\underbrace{u_{it}(t_0, x_0)}_{<0} &=& D_i u_{ixx}(t_0, x_0) - [u_i(t_0,x_0) ((H\bar{u})_i(t_0,x_0))_x]_x\\
			&=& \underbrace{D_i u_{ixx}(t_0, x_0)}_{>0} - \Bigl[\underbrace{u_{ix}(t_0,x_0)}_{=0} ((H\bar{u})_i(t_0,x_0))_x + \underbrace{u(t_0, x_0)}_{=0} (H\bar{u}(t_0,x_0))_{ixx}\Bigr],\end{eqnarray*}
		leading to a contradiction. Hence $u_i(x,t)>0$.
	\end{proof}
	
	\begin{theorem}\label{thm:global}
		Let $u_0\in C^2(\mathbb{T} )^N$ such that $u_0(x)>0$ for $x \in \mathbb{T}$.  Then the solution from \Cref{lem:local} is global in time (i.e. $T_*=\infty$) when working in one spatial dimension ($n=1$).
	\end{theorem}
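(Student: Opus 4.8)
The plan is to show that, under the hypotheses $n=1$ and $u_0>0$, the quantity $\|u\|_{L^1}$ is in fact \emph{constant} in time, so that the blow-up alternative built into the definition of $T_*$ can never be triggered, forcing $T_*=\infty$. The entire substance of the argument is the observation that positivity converts the conserved total mass into a uniform $L^1$ bound, which is precisely the ingredient that the a priori estimates of \cref{lem:local} (through \cref{l:gradl1}) require in order to propagate control up to $H^2$.

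First I would invoke \cref{lem:pos2}: since we work in one spatial dimension with $u_0(x)>0$ on $\mathbb{T}$, the solution satisfies $u(x,t)>0$ for all $x\in\mathbb{T}$ and $t<T_*$. Next I would use the divergence structure of \Cref{eq:model3}: integrating the $i$-th equation over the torus and using the periodic boundary conditions gives $\frac{d}{dt}\int_\mathbb{T} u_i\,dx=0$, so $\int_\mathbb{T} u_i(x,t)\,dx=p_i:=\int_\mathbb{T} u_{i0}\,dx$ for every $i$ and all $t<T_*$. Combining conservation with positivity identifies the $L^1$ norm with the mass, $\|u_i\|_{L^1}=\int_\mathbb{T}|u_i|\,dx=\int_\mathbb{T} u_i\,dx=p_i$, whence
\[ \|u\|_{L^1}=\sum_{i=1}^N\|u_i\|_{L^1}=\sum_{i=1}^N p_i=\|u_0\|_{L^1} \]
is constant on $[0,T_*)$.

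Then I would rule out $T_*<\infty$. Recall that, in the unbounded alternative, $T_*$ is defined as the earliest time at which $\|u\|_{L^1}=2\|u_0\|_{L^1}$. But the previous step shows $\|u\|_{L^1}\equiv\|u_0\|_{L^1}$ on $[0,T_*)$, and since $u_0>0$ we have $\|u_0\|_{L^1}>0$, so $\|u\|_{L^1}$ never attains $2\|u_0\|_{L^1}$ on $[0,T_*)$; by continuity of $t\mapsto\|u\|_{L^1}$, which follows from the $C^0([0,T_*),H^2(\mathbb{T}))^N$ regularity of \cref{lem:local} together with the embedding $L^2\hookrightarrow L^1$ on the finite-measure torus, the value at $T_*$ would still equal $\|u_0\|_{L^1}\neq 2\|u_0\|_{L^1}$. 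This contradicts the defining property of $T_*$ in the unbounded case, so $\|u\|_{L^1}$ must be bounded for all time and hence, by definition, $T_*=\infty$. Finally, having established $T_*=\infty$, I would simply appeal to \cref{lem:local}, whose conclusion holds on $[0,T_*)$: in one spatial dimension this yields $u\in C^1((0,\infty),L^2(\mathbb{T}))^N\cap C^0([0,\infty),C^2(\mathbb{T}))^N$ with the solution classical, giving global existence.

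The computational content here is negligible; the only delicate point I expect is the logical bookkeeping around $T_*$, namely ensuring there is no circularity between ``positivity holds for $t<T_*$'' and ``$\|u\|_{L^1}$ determines $T_*$''. This is resolved because \cref{lem:pos2} is valid on the whole half-open interval $[0,T_*)$ regardless of whether $T_*$ is finite, so the constancy of $\|u\|_{L^1}$ on $[0,T_*)$ is genuine and it is this constancy that excludes the finite-time alternative.
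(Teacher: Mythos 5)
Your proposal is correct and is essentially the paper's own argument: both rest on \cref{lem:pos2} plus conservation of mass, combined with the definition of $T_*$ as the earliest time at which $\|u\|_{L^1}=2\|u_0\|_{L^1}$. The only cosmetic difference is the direction of the contradiction --- you show positivity makes $\|u\|_{L^1}$ constant so it can never reach $2\|u_0\|_{L^1}$, while the paper assumes $\|u\|_{L^1}>\|u_0\|_{L^1}$ at some $t_*<T_*$ and deduces $u$ must be negative somewhere --- which is the same argument read contrapositively.
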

	\begin{proof}
		Recall that if $T_*<\infty$ then $\|u\|_{L^1}\rightarrow \infty$ at some point in time and $T_*$ defined as the earliest time at which $\|u\|_{L^1}=2\|u_{0}\|_{L^1}$.  Therefore  $\|u\|_{L^1}$ will be strictly greater than $\|u_{0}\|_{L^1}$ for some $t_* \in (0,T_*)$.  But, since $\int_{\mathbb T} u dx=\|u_{0}\|_{L^1}$ for all time, we have $\int_{\mathbb T} u(x,t_*) dx < \int_{\mathbb T} |u(x,t_*)|dx$, which implies that there must be some $x$ such that $u(x,t_*)<0$, contradicting positivity (\cref{lem:pos2}).  Thus we must have $T_*=\infty$ and solutions are global in time.
	\end{proof}
	
	\section{Numerics}
	\label{sec:numeric}
	In this section we describe a method to solve  System \eqref{eq:model1} numerically, based on the general class of spectral methods \cite{canuto2007spectral}.  For simplicity, we focus on simulations within 1D domains.  However, this procedure may be also extended to any spatial dimension.  Although our analytic results rely on the averaging kernel, $K$, being twice differentiable, our numerical method does not rely on this constraint.  Since the study of \cite{pottslewis2019} used a top-hat kernel (which is not differentiable), we demonstrate our method using this kernel as well as an example twice-differentiable kernel.
	
	The leading idea behind a spectral method is to write the solution of a PDE as a sum of smooth basis  functions with time dependent coefficients. By substituting this expansion in the PDE, we obtain a system of ordinary differential equations (ODEs), which can be solved using any numerical method for ODEs \cite{butcher2008numerical}.
	
	In the previous section we showed that, under the hypothesis of \cref{lem:local}, any solution $ u(x,t) $ to System \eqref{eq:model1} is $C^2$-smooth, so it is possible to expand it as
	$$ u (x,t)= \sum_{h=-\infty}^{\infty} \hat{u}_{h} (t) \phi_h (x),$$
	where the coefficients $ \hat{u}_{h} $ are computed by using the global behaviour of the function $ u $ and $ \{\phi_h\}_h $ is a complete set of orthogonal smooth functions.
	
	Since System \eqref{eq:model1} is periodic in space with period $ L $, we adopt the Fourier basis as complete set of orthogonal functions and expand each component of the solution $ u=(u_1, \dots, u_N) $ as
	\begin{equation}\label{eq:FourierExp}
	u_j (x,t)= \sum_{h=-\infty}^{\infty} \hat{u}_{jh} (t)  e^{\frac{2 \pi i}{L}h x},  \text{ for }  j=1, \dots, N,
	\end{equation}
	where $ \hat{u}_{jh}(t) = \frac{1}{L}\int_{0}^{L} u_j (x,t) e^{-\frac{2 \pi i}{L}h x} dx$ are the Fourier coefficients, which represent the solution in the frequency space.
	
	One of the advantages of working with the Fourier expansion is that the operation of derivation becomes particularly simple if performed in the frequency space. Indeed, differentiating \Cref{eq:FourierExp}, we find
	\begin{equation}
	\partial_x	u_j (x,t)= \sum_{h=-\infty}^{\infty} \frac{2 \pi i}{L}h \hat{u}_{jh} (t)  e^{\frac{2 \pi i}{L}h x},  \text{ for }  j=1, \dots, N,
	\end{equation}
	we see that the Fourier coefficients of the derivative are obtained by multiplying each $ \hat{u}_{jh} $ by the term $ \frac{2 \pi i}{L}h $.
	
	Another important property of the Fourier transform, particularly useful in our case, is that the convolution in the physical space is equivalent to a multiplication in the frequency space. Indeed, the Convolution Theorem states that the convolution between two functions  $ f (x)= \sum_{h=-\infty}^{\infty} \hat{f}_{h}  e^{\frac{2 \pi i}{L}h x} $ and $ g (x)= \sum_{h=-\infty}^{\infty} \hat{g}_{h}  e^{\frac{2 \pi i}{L}h x} $ has the following Fourier expansion
	\begin{equation}
	\label{eq:conv}
	f \ast g (x) = \sum_{h=-\infty}^{\infty} \hat{f}_{h}  \hat{g}_{h} e^{\frac{2 \pi i}{L}h x}.
	\end{equation}
	Therefore, to solve numerically System \eqref{eq:model1} the operations of differentiations and convolution will be performed in the frequency space, while multiplications will be done in the physical space.
	
	To implement our numerical method, we discretize both spatial and temporal domain, and consider the approximation of the solution $ u(x,t) $ on the grid points $ x_m = m \Delta x $ and $ t^n= n \Delta t $, with $ m \in \{0,1,\dots,M-1\} $ and $ n \in \mathbb{N} $. We define $ U_{jm}^{n} = u_j (x_m, t^n) $.  Then, in discrete space, the coefficients $  \hat{u}_{jh}(t) $ of \Cref{eq:FourierExp} are replaced by
	\begin{equation}\label{eq:dft}
	\hat{U}_{jh}^n =\frac{1}{M}\sum_{m=0}^{M-1}  U_{jm}^{n}  e^{-\frac{2 \pi i}{M}h m}, 		
	\end{equation}
	which represent the discrete Fourier transform (DFT) of $ u_j(x,t) $.
	
	The inverse discrete Fourier transform (IDFT), used to compute  $  U_{jm}^{n} $ from $ \hat{U}_{jh}^n $, is given by the formula
	\begin{eqnarray}\label{eq:idft}
	U_{jm}^{n} = \sum_{h=0}^{M-1} \hat{U}_{jh}^{n} e^{\frac{2 \pi i}{M}h m}.
	\end{eqnarray}
	We can convert the solution from physical to frequency space, and vice versa, using the relations \eqref{eq:dft} and \eqref{eq:idft}. However, we can speed the procedure up considerably by using a Fast Fourier Transform (FFT) algorithm, which reduces the number of computations from $ M^2 $ to $ M \log M $ \cite{press2007numerical}. Analogously, an Inverse Fast Fourier Transform (IFFT) algorithm can be used to perform a fast backward Fourier transform from the frequency domain to the physical domain.
	
	Let $ \mathbf{U}_j^n=[U_{j 1}^n, \dots, U_{j (M-1)}^n] $, for $ j=1, \dots,N $ and $ \hat{\mathbf{U}}_j^n=[\hat{U}_{j1}^n, \dots, \hat{U}_{j(M-1)}^n] $, for $ j=1, \dots,N $, which represent the solution in the frequency domain at time $ t=n \Delta t $.  Then the algorithm for calculating the solution is as follows.
	
	First, we calculate the  non-local terms $ \bar{\mathbf{U}}_j^n = K \ast \mathbf{U}_j^n $ by passing to the frequency domain and applying the Convolution Theorem (\Cref{eq:conv}).  We then stay in the frequency domain to calculate the derivative $ \partial_x \bar{\mathbf{U}}_j^n $.  Passing back to physical space, we calculate the product  $ \mathbf{U}_i^n \cdot \partial_x  \bar{\mathbf{U}}_j^n  $.  Then the derivative of this product, $ \partial_x(\mathbf{U}_i^n \cdot \partial  \bar{\mathbf{U}}_j^n)$, is calculated in the frequency domain.  This deals with the second term in our PDE (System \ref{eq:model1}).	Finally, we calculate the diffusion term from System (\ref{eq:model1}) by passing to frequency space.  
	
	This whole procedure results in defining a function, $f(\mathbf{U}_j^n)$, which is a discrete representation of the right-hand side of the PDE in  System (\ref{eq:model1}).  Thus we have the following system of ODEs
	\begin{eqnarray}
	\frac{d \mathbf{U}_j^n}{dt} = f(\mathbf{U}_j^n), \quad j=1, \dots, N,
	\end{eqnarray}
	which can be solved using any ODE solver.  In particular, we used a Runge-Kutta scheme with $ \Delta t = 10^{-4} $ \cite{butcher2008numerical}.  To calculate the coefficients of Fourier transform and inverse Fourier transform, we used the \texttt{drealft} fast Fourier transform subroutine from \cite{press2007numerical}. This routine requires that the number of grid points must be a power of 2.  We used the spatial domain $ [0,1] $ with $ 128 $ spatial grid points (so $\Delta x=1/128$) and periodic boundary conditions.
	
	For the spatial averaging kernel $ K $, we used two different functions.  The first is the von Mises distribution
	\begin{equation}\label{eq:smoothk}
	K_a(x)= \frac{e^{a \cos(2 \pi x)}}{I_0(a)},
	\end{equation}
	defined on $[-1/2,1/2]$ (which is equivalent to $[0,1]$ due to the periodic boundary conditions), where $ I_0(a) $ is the modified Bessel function of order $ 0 $. This distribution both satisfies the periodic boundary conditions and is twice differentiable, as required by \cref{l2:h1}, \cref{l2:h1b}, \cref{l:deltainfty} and \cref{l:gradl1}.  We compare this with the following top-hat function on $[-1/2,1/2]$, used by \cite{pottslewis2019}
	\begin{equation}\label{eq:nonsmoothk}
	K_\gamma (x)= \begin{cases}
	\frac{1}{2 \gamma}, & -\gamma\leq x \leq \gamma, \\
	& \\
	0, & \text{otherwise}.
	\end{cases}
	\end{equation}
	To compare numerical solutions with the two averaging kernels, $K_a$ and $K_\gamma$, we use a common standard deviation
	\begin{equation}\label{eq:sd}
	\sigma=\sqrt{\int_{-1/2}^{1/2} x^2K(x){\rm d}x-\left(\int_{-1/2}^{1/2} xK(x){\rm d}x\right)^2}.
	\end{equation}
	We implemented our algorithm in the C programming language and demonstrate it using the simple case of two interacting populations, $ u_1 $ and $ u_2 $.  
	
	In \cref{fig:StationarySolutions} we show the spatiotemporal evolution of the numerical solution, with $K=K_a$, for different values of the standard deviation $ \sigma $. For $ \sigma=0.1 $, we used a smooth random perturbation of the homogeneous steady state as initial condition. In this case, the solution appears to evolve towards a stationary state, and we stopped the numerics when the difference between two time-consecutive solutions went below $10^{-6}$.  This took about 5 seconds of computational time to reach. 
	We then used this stationary state as initial condition for a simulation with $ \sigma=0.05 $, whose spatiotemporal evolution is shown in the second line of \cref{fig:StationarySolutions}. As in the previous case, the solution appears to settle into a stationary state, which was used as initial condition to perform a simulation with $ \sigma=0.025 $.
	We see that, as $\sigma$ is decreased, the steady state solutions become increasingly flat-topped.  
	
	In each of these examples, $h_{ii}=0$ for $i=1,2$.  In this case, \cite{pottslewis2019} showed that the system admits an energy functional which decreases over time, a feature that often accompanies systems that reach a stable steady state, and indeed this is what we observe in our numerics.  However, if we drop the $h_{ii}=0$ assumption, it is possible to observe patterns that exhibit oscillatory behaviour that does not appear to stabilise over time (\cref{fig:Oscillations}).
	
	\begin{figure}[h!]
		\centering
		\includegraphics[width=\textwidth]{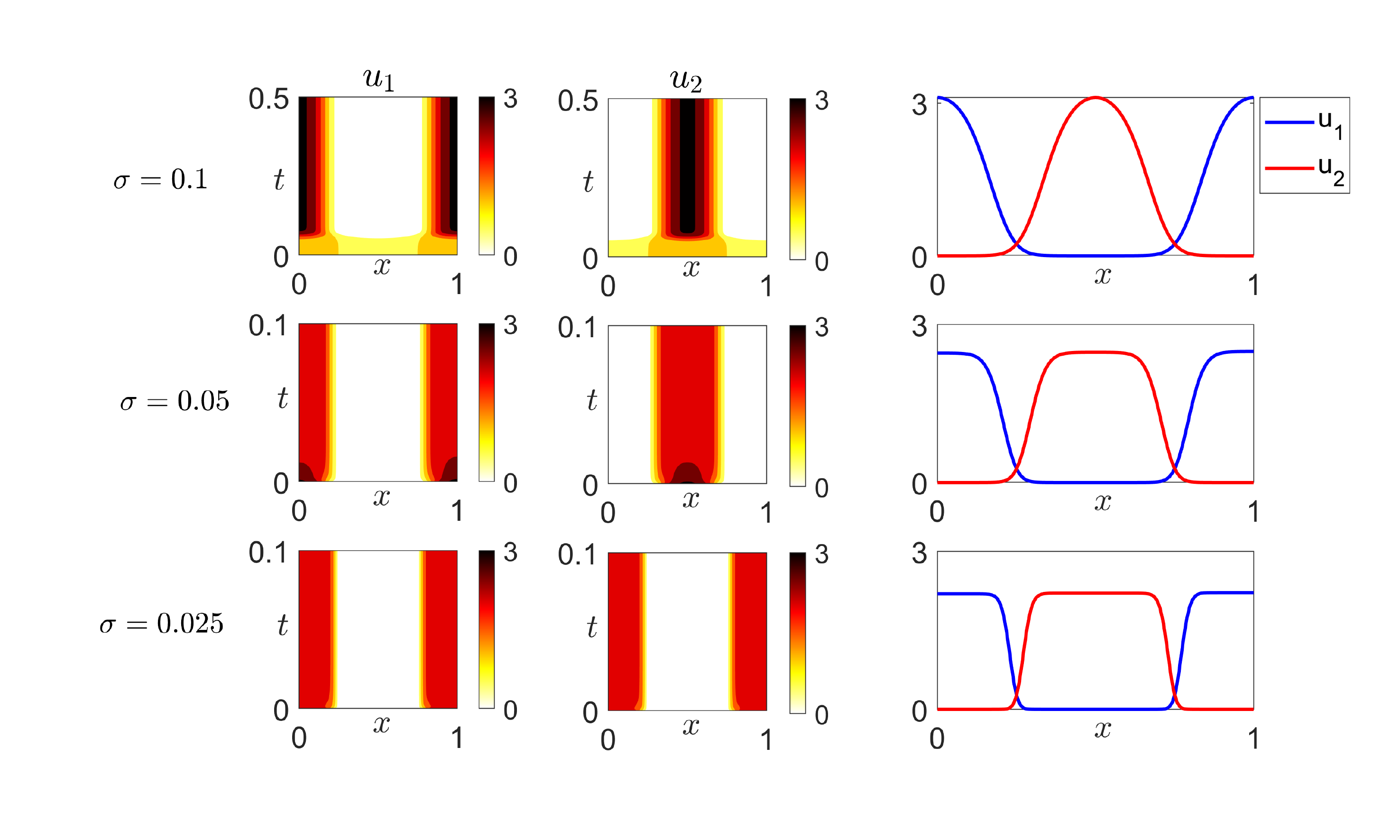}
		\caption{Spatio temporal evolution of the numerical solution of \eqref{eq:model1} with $ K=K_a $ defined in \Cref{eq:smoothk}, for different values of the standard deviation $ \sigma $. On the right column: spatial profile of the numerical stationary solution. The parameter values are: $ D_1=D_2=1 $, $ h_{11}=h_{22}=0 $, $ h_{12}=h_{21}=-2 $. For $ \sigma=0.1 $, $ a=3.225$; for $ \sigma=0.05 $, $ a=10.664$; for $ \sigma=0.025 $, $ a=41.01$.}
		\label{fig:StationarySolutions}
	\end{figure}
	
	\begin{figure}[h!]
		\centering
		\includegraphics[width=\textwidth]{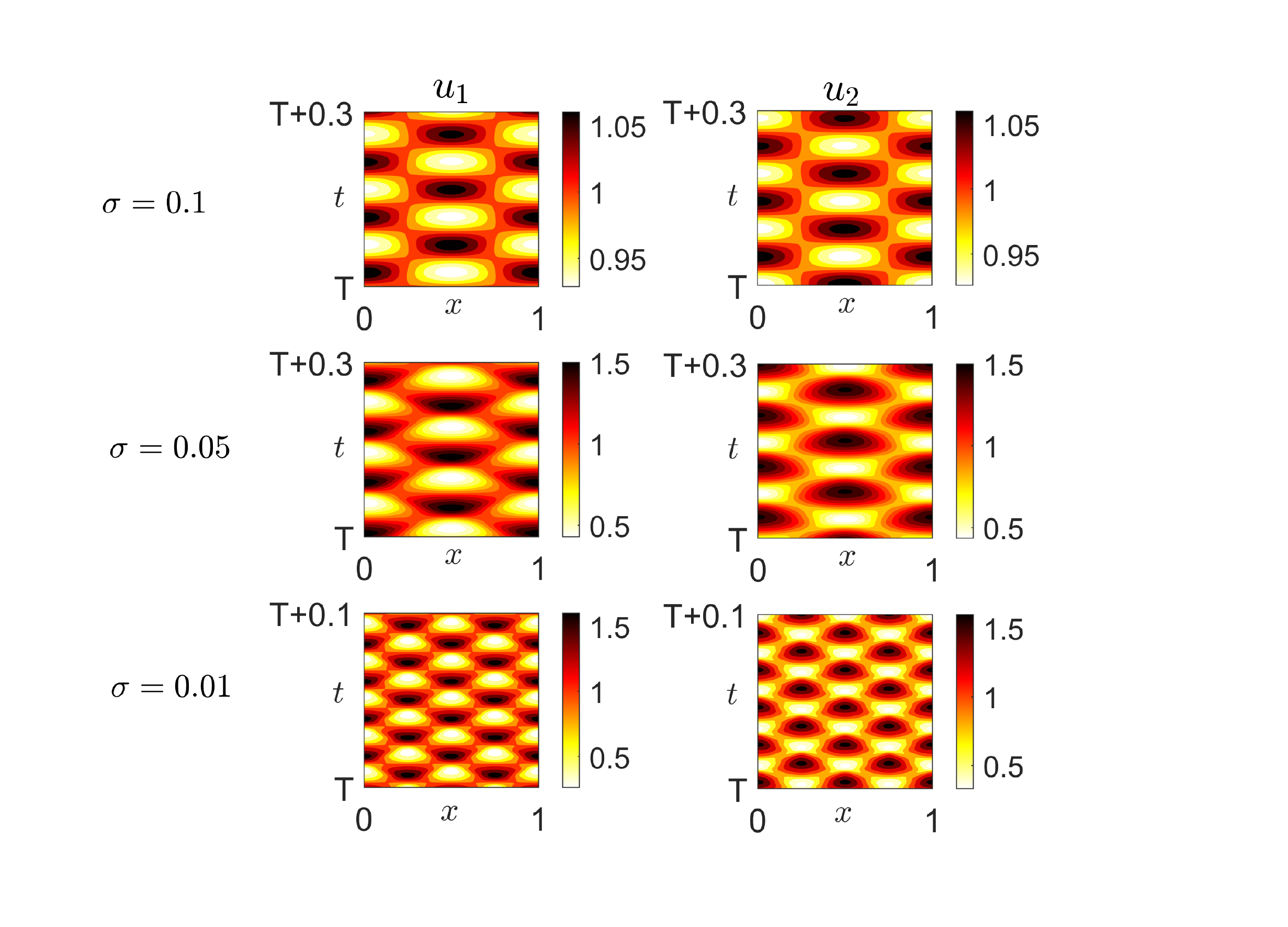}
		\caption{Spatio temporal evolution of the numerical solution of \eqref{eq:model1} with $ K $ defined in \Cref{eq:smoothk}, for different values of $ \sigma $. The parameter values are: $ D_1=D_2=1 $, $ h_{11}=h_{22}=h_{21}=1.5 $, $ h_{12}=-1 $. For $ \sigma =0.1 $, $ a=3.1 $; for $ \sigma=0.05 $, $ a=10.5 $; for $ \sigma=0.01 $, $ a=250 $.}
		\label{fig:Oscillations}
	\end{figure}
	
	Comparing the numerical solutions obtained with the von Mises kernel \eqref{eq:smoothk} and top-hat kernel \eqref{eq:nonsmoothk} for different values of $\sigma$, we see a good numerical agreement between numerical steady-state solutions (\cref{fig:ComparisonSmoothTopHat}). 	Hence, numerically, either choice is possible. 
	
	\begin{figure}[h]
		\centering
		\includegraphics[width=\textwidth]{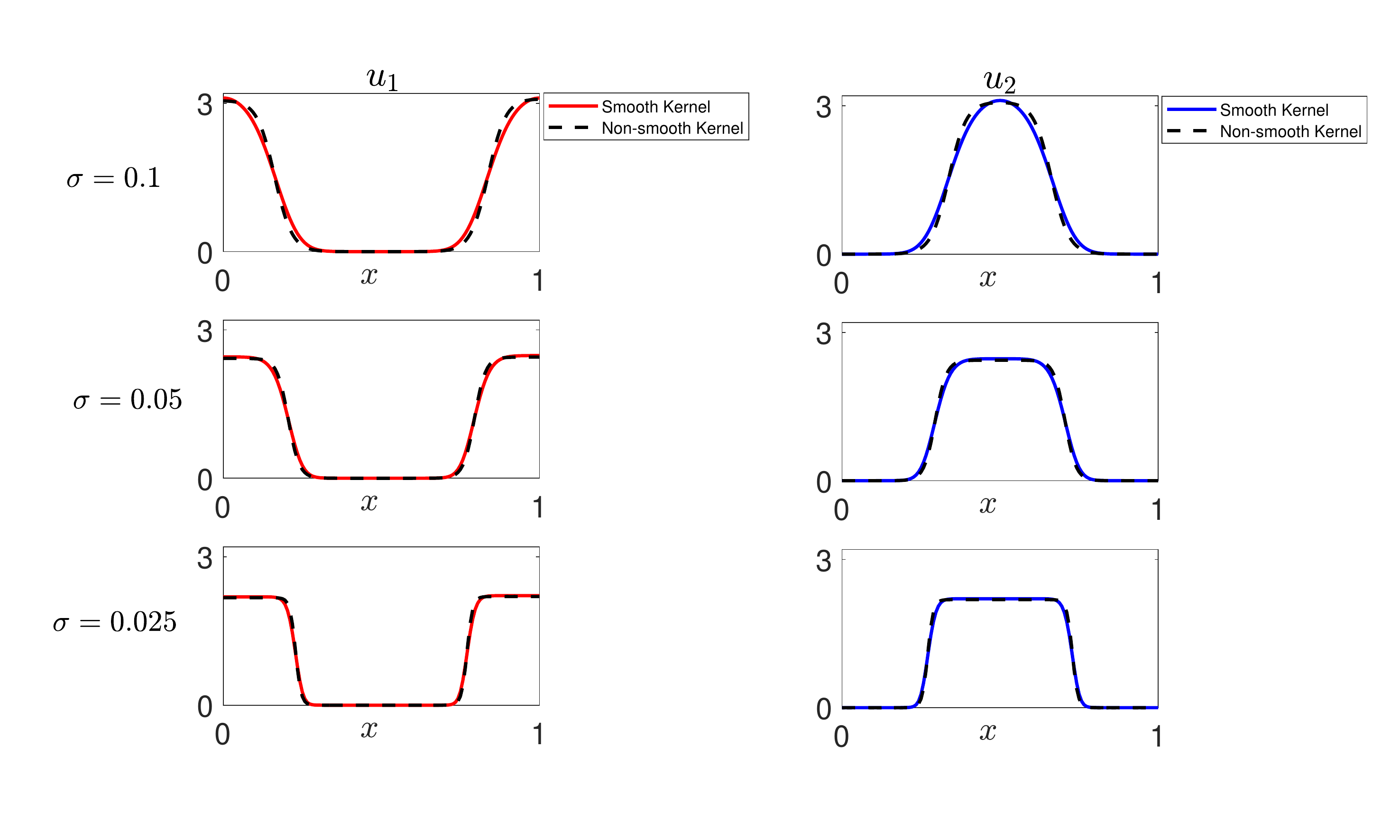}
		\caption{Comparison between the spatial profiles of the stationary solutions obtained with the smooth kernel K \eqref{eq:smoothk} and the non-smooth kernel $ K_\gamma $ \eqref{eq:nonsmoothk}, for different values of the standard deviation $ \sigma $. The parameter values are: $ D_1=D_2=1 $, $ h_{11}=h_{22}=0 $, $ h_{12}=h_{21}=-2 $. For $ \sigma=0.1 $, $ a=3.225 $ and $ \gamma=0.1732 $. For $ \sigma=0.05 $, $ a=10.664 $ and $ \gamma=0.0866 $. For $ \sigma=0.025 $, $ a=41.01 $ and $ \gamma=0.0433 $.}
		\label{fig:ComparisonSmoothTopHat}
	\end{figure}

	\section{Discussion}
	\label{sec:disc}

	The development of our model (\Cref{eq:msagg}) has been driven by the need to include non-local spatial terms into realistic models for organism interactions. However, when developing a new modelling framework, it is always a good idea to show that the model is well defined and biologically sensible, as we do here. In particular, it is important to identify the mathematical conditions that are needed to prove existence and uniqueness of solutions. In our case, for example, we find that the smoothness of the averaging kernel is essential to prove existence of classical solutions for the PDE model. 
	This implies that our favorite choice, the indicator function on a ball of radius $R$, used by \cite{pottslewis2019}, is not included in the  existence results. This is not a large restriction for the biology, since the indicator function can always be mollified (smoothed out) to obtain a regular kernel. However, it opens an interesting mathematical question to try to understand what goes wrong when the averaging kernel has jumps. In our case we cannot find a uniform $L^\infty$ estimate for convolution with $\nabla K$, which is an observation, but not an explanation of this limitation. In numerical simulations, we compare smooth and non-smooth averaging kernels and we see no appreciable difference. The difference is certainly much smaller than can ever be expected from errors that arise through empirical measurements of species distributions.  
	
	In our theory we consider a periodic domain, represented through the $n$-torus $\mathbb{T}$. Other domains with other boundary conditions can be studied with minimal modifications. The boundary conditions were essential to establish \cref{lemmaTaylor} about the regularity of the heat equation semigroup on $\mathbb{T}$. Similar regularity results are known for other boundary conditions \cite{LSU,Lieberman96}, and in those cases our method applies directly. 
	
	Non-local models for one or two species have been extensively studied before (see for example \cite{carrillo2018zoology,Eftimienonlocal} and the references that were mentioned in the Introduction). Our emphasis here is on a multiple species situation.  This system was originally introduced in \cite{pottslewis2019}, in a slightly modified form, for the purposes of understanding the effect of between-population movements on the spatial structure of ecosystems, something generally ignored in species distribution modelling \cite{dormann2018biotic}.  Understanding the spatial distribution of species has been named as one of the top five research fronts in ecology \cite{renner2013equivalence}, so the model presented here has potential for giving insights into various important problems in biology where biotic interactions affect movement.  These include, but are not limited to, the emergence of home range patterns \cite{borger2008there}, the geometry of selfish herds \cite{hamilton1971geometry}, the landscape of fear \cite{laundre2010landscape}, and biological invasions \cite{lewisetal2016}.
	
	The study of \cite{pottslewis2019}  focused on pattern formation via the tools of linear stability, numerical bifurcation, and energy functional analysis.  
	This study showed that the linear stability problem became ill-posed in the `local limit', i.e. as $K$ tends towards a Dirac delta function so that advection becomes non-local.  Analogously, here we show that solutions exist for smooth $K$, but depend upon $\|\nabla K\|_\infty$  being finite, so will also break down if $K$ is a Dirac delta function.  This highlights the importance of non-locality in our advection term.  Indeed, numerical simulations (e.g. \cref{fig:StationarySolutions}) suggest that, as $K$ narrows (i.e. its standard deviation decreases), the maximum gradient of any non-trivial stable steady state increases.  We conjecture that failure to include non-locality in the advection term (equivalently, setting $K$ to be a Dirac delta function) will lead to gradient blow-up.
	
	Our results, together with those of \cite{pottslewis2019}, suggest a rich variety of pattern formation properties in non-local multi-species advection-diffusion models.  Here, specifically, we see two new features related to pattern formation.  The first is the appearance of oscillatory solutions in two-species models, enabled by the inclusion of self-attractive terms.  Second, we see that changing the width of spatial averaging, given by $\sigma$, can have a qualitative effect on the patterns that emerge (\cref{fig:Oscillations}).  We have only scratched the surface here, in order to introduce our numerical method, the main purpose of this work being to establish existence of solutions.  Nonetheless, the ability to link underlying processes with emergent patterns is a principal question in biology \cite{hillenpainter2013,CosnerCantrellbook,potts2021stable}, and the evident rich pattern formation properties of these models suggest this will be a formidable task for future work, building on the increasing literature in this area \cite{pottslewis2016a,buttenschon2020non,carrillo2019aggregation}.


	

	\bibliographystyle{siamplain}
	\bibliography{eup_refs}

\begin{thebibliography}{10}

\bibitem{araujo2006five}
{\sc M.~B. Araujo and A.~Guisan}, {\em Five (or so) challenges for species
  distribution modelling}, Journal of biogeography, 33 (2006), pp.~1677--1688.

\bibitem{armstrong2006continuum}
{\sc N.~J. Armstrong, K.~J. Painter, and J.~A. Sherratt}, {\em A continuum
  approach to modelling cell--cell adhesion}, Journal of Theoretical Biology,
  243 (2006), pp.~98--113.

\bibitem{barbaro2020analysis}
{\sc A.~B. Barbaro, N.~Rodriguez, H.~Yolda{\c{s}}, and N.~Zamponi}, {\em
  Analysis of a cross-diffusion model for rival gangs interaction in a city},
  arXiv preprint arXiv:2009.04189,  (2020).

\bibitem{bastille2018spatial}
{\sc G.~Bastille-Rousseau, D.~L. Murray, J.~A. Schaefer, M.~A. Lewis, S.~P.
  Mahoney, and J.~R. Potts}, {\em Spatial scales of habitat selection
  decisions: implications for telemetry-based movement modelling}, Ecography,
  41 (2018), pp.~437--443.

\bibitem{beaumont2008choice}
{\sc L.~J. Beaumont, L.~Hughes, and A.~Pitman}, {\em Why is the choice of
  future climate scenarios for species distribution modelling important?},
  Ecology letters, 11 (2008), pp.~1135--1146.

\bibitem{bedrossian2011local}
{\sc J.~Bedrossian, N.~Rodr{\'\i}guez, and A.~L. Bertozzi}, {\em Local and
  global well-posedness for aggregation equations and patlak--keller--segel
  models with degenerate diffusion}, Nonlinearity, 24 (2011), p.~1683.

\bibitem{benhamou2014}
{\sc S.~Benhamou}, {\em Of scales and stationarity in animal movements},
  Ecology Letters, 17 (2014), pp.~261--272.

\bibitem{bertozzi2007finite}
{\sc A.~L. Bertozzi and T.~Laurent}, {\em Finite-time blow-up of solutions of
  an aggregation equation in ${R}^n$}, Communications in Mathematical Physics,
  274 (2007), pp.~717--735.

\bibitem{bertozzi2011lp}
{\sc A.~L. Bertozzi, T.~Laurent, and J.~Rosado}, {\em $l_p$ theory for the
  multidimensional aggregation equation}, Communications on Pure and Applied
  Mathematics, 64 (2011), pp.~45--83.

\bibitem{borger2008there}
{\sc L.~B{\"o}rger, B.~D. Dalziel, and J.~M. Fryxell}, {\em Are there general
  mechanisms of animal home range behaviour? a review and prospects for future
  research}, Ecology Letters, 11 (2008), pp.~637--650.

\bibitem{briscoeetal2002}
{\sc B.~Briscoe, M.~Lewis, and S.~Parrish}, {\em Home range formation in wolves
  due to scent marking}, Bull. Math. Biol., 64 (2002), pp.~261--284,
  \url{https://doi.org/10.1006/bulm.2001.0273}.

\bibitem{burger2014stationary}
{\sc M.~Burger, R.~Fetecau, and Y.~Huang}, {\em Stationary states and
  asymptotic behavior of aggregation models with nonlinear local repulsion},
  SIAM Journal on Applied Dynamical Systems, 13 (2014), pp.~397--424.

\bibitem{butcher2008numerical}
{\sc J.~C. Butcher and N.~Goodwin}, {\em Numerical methods for ordinary
  differential equations}, vol.~2, Wiley Online Library, 2008.

\bibitem{buttenschon2020non}
{\sc A.~Buttensch{\"o}n and T.~Hillen}, {\em Non-local Cell Adhesion Models:
  Symmetries and Bifurcations in 1-D}, Springer, New York, 2021.

\bibitem{canuto2007spectral}
{\sc C.~Canuto, M.~Y. Hussaini, A.~Quarteroni, and T.~A. Zang}, {\em Spectral
  methods: fundamentals in single domains}, Springer Science \& Business Media,
  2007.

\bibitem{Eftimienonlocal}
{\sc J.~Carrillo, F.~Hoffmann, and R.~Eftimie}, {\em Non-local kinetic and
  macroscopic models for self-organised animal aggregations}, Kinetic and
  Related Models, 8 (2015), p.~413,
  \url{https://doi.org/10.3934/krm.2015.8.413},
  \url{http://aimsciences.org//article/id/8639187c-b075-4a23-bba4-de4c12abfb7d}.

\bibitem{carrillo2019aggregation}
{\sc J.~A. Carrillo, K.~Craig, and Y.~Yao}, {\em Aggregation-diffusion
  equations: dynamics, asymptotics, and singular limits}, in Active Particles,
  Volume 2, Springer, 2019, pp.~65--108.

\bibitem{carrillo2018zoology}
{\sc J.~A. Carrillo, Y.~Huang, and M.~Schmidtchen}, {\em Zoology of a nonlocal
  cross-diffusion model for two species}, SIAM Journal on Applied Mathematics,
  78 (2018), pp.~1078--1104.

\bibitem{CosnerCantrellbook}
{\sc C.~Cosner and R.~Cantrell}, {\em Spatial Ecology via Reaction-Diffusion
  Equations}, Wiley, Hoboken, 2003.

\bibitem{craig2016blob}
{\sc K.~Craig and A.~Bertozzi}, {\em A blob method for the aggregation
  equation}, Mathematics of computation, 85 (2016), pp.~1681--1717.

\bibitem{CuckerSmale}
{\sc F.~Cucker and S.~Smale}, {\em Emergent behavior in flocks}, IEEE Trans.\
  Automat.\ Control, 52 (2007), p.~852–862.

\bibitem{delgadino2019uniqueness}
{\sc M.~G. Delgadino, X.~Yan, and Y.~Yao}, {\em Uniqueness and nonuniqueness of
  steady states of aggregation-diffusion equations}, Communications on Pure and
  Applied Mathematics,  (2019).

\bibitem{dolbeault2004optimal}
{\sc J.~Dolbeault and B.~Perthame}, {\em Optimal critical mass in the two
  dimensional keller--segel model in $r^2$}, Comptes Rendus Mathematique, 339
  (2004), pp.~611--616.

\bibitem{dormann2018biotic}
{\sc C.~F. Dormann, M.~Bobrowski, D.~M. Dehling, D.~J. Harris, F.~Hartig,
  H.~Lischke, M.~D. Moretti, J.~Pagel, S.~Pinkert, M.~Schleuning, et~al.}, {\em
  Biotic interactions in species distribution modelling: 10 questions to guide
  interpretation and avoid false conclusions}, Global ecology and biogeography,
  27 (2018), pp.~1004--1016.

\bibitem{Eftimie2}
{\sc R.~Eftimie}, {\em Hyperbolic and kinetic models for self-organized
  biological aggregations and movement: a brief review}, Journal of
  Mathematical Biology, 65 (2012), pp.~35--75,
  \url{https://doi.org/10.1007/s00285-011-0452-2},
  \url{https://doi.org/10.1007/s00285-011-0452-2}.

\bibitem{eftimie11}
{\sc R.~Eftimie, J.~Bramson, and D.~Earn}, {\em Interactions between the immune
  system and cancer: A brief review of non-spatial mathematical models},
  Bulletin of Mathematical Biology, 73 (2011), pp.~2--32.

\bibitem{Eftimie2007}
{\sc R.~Eftimie, G.~de~Vries, and M.~Lewis}, {\em Complex spatial group
  patterns result from different animal communication mechanisms.}, Proceedings
  of the National Academy of Sciences of the United States of America, 104
  (2007), pp.~6974--6979, \url{https://doi.org/10.1073/pnas.0611483104}.

\bibitem{evers2017equilibria}
{\sc J.~H. Evers, R.~C. Fetecau, and T.~Kolokolnikov}, {\em Equilibria for an
  aggregation model with two species}, SIAM Journal on Applied Dynamical
  Systems, 16 (2017), pp.~2287--2338.

\bibitem{fagioli2019multiple}
{\sc S.~Fagioli and Y.~Jaafra}, {\em Multiple patterns formation for an
  aggregation/diffusion predator-prey system}, arXiv preprint arXiv:1904.05224,
   (2019).

\bibitem{hamilton1971geometry}
{\sc W.~D. Hamilton}, {\em Geometry for the selfish herd}, Journal of
  theoretical Biology, 31 (1971), pp.~295--311.

\bibitem{HilLewis2014}
{\sc T.~Hillen and M.~Lewis}, {\em Mathematical ecology of cancer}, in Managing
  complexity, reducing perplexity. Modeling biological systems, J.~Marsan and
  M.~Delitala, eds., Springer, 2014, pp.~1--14.

\bibitem{hillenpainter2013}
{\sc T.~Hillen and K.~Painter}, {\em Transport and anisotropic diffusion models
  for movement in oriented habitats}, in Dispersal, Individual Movement and
  Spatial Ecology, M.~A. Lewis, P.~K. Maini, and S.~V. Petrovskii, eds.,
  Lecture Notes in Mathematics, Springer Berlin Heidelberg, 2013, pp.~177--222,
  \url{https://doi.org/10.1007/978-3-642-35497-7_7},
  \url{http://dx.doi.org/10.1007/978-3-642-35497-7_7}.

\bibitem{hilpainterschmeiser}
{\sc T.~Hillen, K.~Painter, and C.~Schmeiser}, {\em Global existence for
  chemotaxis with finite sampling radius}, Discr.\ Cont.\ Dyn.\ Syst.\ B, 7
  (2007), pp.~125--144.

\bibitem{hillen2009user}
{\sc T.~Hillen and K.~J. Painter}, {\em A user’s guide to pde models for
  chemotaxis}, Journal of mathematical biology, 58 (2009), pp.~183--217.

\bibitem{james2015numerical}
{\sc F.~James and N.~Vauchelet}, {\em Numerical methods for one-dimensional
  aggregation equations}, SIAM Journal on Numerical Analysis, 53 (2015),
  pp.~895--916.

\bibitem{LSU}
{\sc O.~Lady\v{z}henskaja, V.~Solonnikov, and N.~Ural'ceva}, {\em Linear and
  Quasilinear Equations of Parabolic Type}, AMS Providence, Rhode Island, 1968.

\bibitem{laundre2010landscape}
{\sc J.~W. Laundr{\'e}, L.~Hern{\'a}ndez, and W.~J. Ripple}, {\em The landscape
  of fear: ecological implications of being afraid}, The Open Ecology Journal,
  3 (2010).

\bibitem{Levine00}
{\sc H.~Levine, W.~Rappel, and I.~Cohen}, {\em Self-organization in systems of
  self-propelled particles}, Phys.\ Rev.\ E, 63 (2000).

\bibitem{lewisetal2016}
{\sc M.~A. Lewis, S.~V. Petrovskii, and J.~R. Potts}, {\em The mathematics
  behind biological invasions}, vol.~44, Springer, 2016.

\bibitem{Lieberman96}
{\sc G.~Lieberman}, {\em Second Order Parabolic Differential Equations}, World
  Scientific, Singapore, 1996.

\bibitem{Lutscherbook}
{\sc F.~Lutscher}, {\em Integrodifference Equations in Spatial Ecology},
  Springer, New York, 2020.

\bibitem{marmion2009evaluation}
{\sc M.~Marmion, M.~Parviainen, M.~Luoto, R.~K. Heikkinen, and W.~Thuiller},
  {\em Evaluation of consensus methods in predictive species distribution
  modelling}, Diversity and distributions, 15 (2009), pp.~59--69.

\bibitem{martinez2020range}
{\sc R.~Martinez-Garcia, C.~H. Fleming, R.~Seppelt, W.~F. Fagan, and J.~M.
  Calabrese}, {\em How range residency and long-range perception change
  encounter rates}, Journal of theoretical biology, 498 (2020), p.~110267.

\bibitem{mogilner1999non}
{\sc A.~Mogilner and L.~Edelstein-Keshet}, {\em A non-local model for a swarm},
  Journal of Mathematical Biology, 38 (1999), pp.~534--570.

\bibitem{norberg2019comprehensive}
{\sc A.~Norberg, N.~Abrego, F.~G. Blanchet, F.~R. Adler, B.~J. Anderson,
  J.~Anttila, M.~B. Ara{\'u}jo, T.~Dallas, D.~Dunson, J.~Elith, et~al.}, {\em A
  comprehensive evaluation of predictive performance of 33 species distribution
  models at species and community levels}, Ecological Monographs, 89 (2019),
  p.~e01370.

\bibitem{okubolevin2013}
{\sc A.~Okubo and S.~A. Levin}, {\em Diffusion and ecological problems: modern
  perspectives}, vol.~14, Springer Science \& Business Media, 2013.

\bibitem{Osswald}
{\sc M.~Osswald, E.~Jung, F.~Sahm, G.~Solecki, V.~Venkataramani, J.~Blaes,
  S.~Weil, H.~Horstmann, B.~Wiestler, M.~Syed, L.~Huang, M.~Ratliff, K.~Jazi,
  F.~Kurz, T.~Schmenger, D.~Lemke, M.~G\"ommel, M.~Pauli, Y.~Liao, P.~H\"aring,
  S.~Pusch, V.~Herl, C.~Steinhï¿œuser, D.~Krunic, M.~Jarahian, H.~Miletic,
  A.~Berghoff, O.~Griesbeck, G.~Kalamakis, O.~Garaschuk, M.~Preusser, S.~Weiss,
  H.~Liu, S.~Heiland, M.~Platten, P.~Huber, T.~Kuner, A.~von Deimling, W.~Wick,
  and F.~Winkler}, {\em Brain tumour cells interconnect to a functional and
  resistant network}, Nature, 528 (2015), p.~nature16071.

\bibitem{painteretal2015}
{\sc K.~Painter, J.~Bloomfield, J.~Sherratt, and A.~Gerisch}, {\em A nonlocal
  model for contact attraction and repulsion in heterogeneous cell
  populations}, Bulletin of Mathematical Biology, 77 (2015), pp.~1132--1165.

\bibitem{potts2021stable}
{\sc J.~Potts and K.~J. Painter}, {\em Stable steady-state solutions of some
  biological aggregation models}, SIAM J. Appl. Math.,  (in press).

\bibitem{potts2014predicting}
{\sc J.~R. Potts, G.~Bastille-Rousseau, D.~L. Murray, J.~A. Schaefer, and M.~A.
  Lewis}, {\em Predicting local and non-local effects of resources on animal
  space use using a mechanistic step selection model}, Methods in ecology and
  evolution, 5 (2014), pp.~253--262.

\bibitem{pottslewis2016a}
{\sc J.~R. Potts and M.~A. Lewis}, {\em How memory of direct animal
  interactions can lead to territorial pattern formation}, J Roy Soc Interface,
   (2016).

\bibitem{pottslewis2016b}
{\sc J.~R. Potts and M.~A. Lewis}, {\em Territorial pattern formation in the
  absence of an attractive potential}, J Math Biol, 72 (2016), pp.~25--46.

\bibitem{pottslewis2019}
{\sc J.~R. Potts and M.~A. Lewis}, {\em Spatial memory and taxis-driven pattern
  formation in model ecosystems}, Bulletin of Mathematical Biology, 81 (2019),
  pp.~2725--2747, \url{https://doi.org/10.1007/s11538-019-00626-9},
  \url{https://doi.org/10.1007/s11538-019-00626-9}.

\bibitem{press2007numerical}
{\sc W.~H. Press, H.~William, S.~A. Teukolsky, W.~T. Vetterling, A.~Saul, and
  B.~P. Flannery}, {\em Numerical recipes 3rd edition: The art of scientific
  computing}, Cambridge university press, 2007.

\bibitem{RaiEcology}
{\sc V.~Rai}, {\em Spatial Ecology: Patterns and Processes}, Bentham Science,
  Sharja, 2018.

\bibitem{renner2013equivalence}
{\sc I.~W. Renner and D.~I. Warton}, {\em Equivalence of maxent and poisson
  point process models for species distribution modeling in ecology},
  Biometrics, 69 (2013), pp.~274--281.

\bibitem{shahriyari16}
{\sc L.~Shahriyari}, {\em A new hypothesis: some metastases are the result of
  inflammatory processes by adapted cells, especially adapted immune cells at
  sites of inflammation}, F1000 Research, 5 (2016).
\newblock doi:10.12388/f1000research.8055.1.

\bibitem{shi2021spatial}
{\sc Q.~Shi, J.~Shi, and H.~Wang}, {\em Spatial movement with distributed
  memory}, Journal of Mathematical Biology, 82 (2021), pp.~1--32.

\bibitem{taylorIII}
{\sc M.~Taylor}, {\em Partial Differential Equations III}, Springer, New York,
  1996.

\bibitem{topaz2006nonlocal}
{\sc C.~M. Topaz, A.~L. Bertozzi, and M.~A. Lewis}, {\em A nonlocal continuum
  model for biological aggregation}, Bulletin of Mathematical Biology, 68
  (2006), p.~1601.

\bibitem{turchin1989}
{\sc P.~Turchin}, {\em Population consequences of aggregative movement},
  Journal of Animal Ecology,  (1989), pp.~75--100.

\bibitem{villero2017integrating}
{\sc D.~Villero, M.~Pla, D.~Camps, J.~Ruiz-Olmo, and L.~Brotons}, {\em
  Integrating species distribution modelling into decision-making to inform
  conservation actions}, Biodiversity and Conservation, 26 (2017),
  pp.~251--271.

\bibitem{zimmermann2010new}
{\sc N.~E. Zimmermann, T.~C. Edwards~Jr, C.~H. Graham, P.~B. Pearman, and J.-C.
  Svenning}, {\em New trends in species distribution modelling}, Ecography, 33
  (2010), pp.~985--989.

\end{thebibliography}

\end{document}